\newcommand{\be}{\begin{eqnarray}}
\newcommand{\ee}{\end{eqnarray}}
\newcommand{\beq}{\begin{equation}}
\newcommand{\eeq}{\end{equation}}
\newcommand{\beqn}{\begin{equation*}}
\newcommand{\eeqn}{\end{equation*}}
\newcommand{\defas}{\mathrel{\raise.095ex\hbox{$:$}\mkern-4.2mu=}}
\newcommand{\defasr}{\mathrel{=\mkern-4.2mu\raise.095ex\hbox{$:$}}}
\newcommand{\norm}[1]{\lVert#1\rVert}
\newcommand{\average}[1]{\langle#1\rangle}
\newcommand{\ie}{\textit{i.e.}}
\DeclareMathAlphabet{\mathfat}{U}{bbold}{m}{n}          
\newtheorem{thm}{Theorem}
\newtheorem{cor}[thm]{Corollary}
\newtheorem{lem}[thm]{Lemma}
\newtheorem{remark}[thm]{Remark}
\newcommand\cC{{\mathcal C}}
\newcommand\cF{{\mathcal F}}
\newcommand\cH{{\mathcal H}}
\newcommand\cL{{\mathcal L}}
\newcommand\cM{{\mathcal M}}
\newcommand\cO{{\mathcal O}}
\newcommand\cS{{\mathcal S}}
\newcommand\bC{{\mathbb C}}
\newcommand\bR{{\mathbb R}}
\newcommand\bS{{\mathbb S}}
\newcommand{\Var}{\operatorname{Var}}
\begin{document}

\title[A strong pair correlations bound implies the CLT]{A strong pair correlation bound implies the CLT for Sinai Billiards}

\author[Mikko Stenlund]{Mikko Stenlund}
\email{mikko@cims.nyu.edu}
\address[Mikko Stenlund]{
Courant Institute of Mathematical Sciences\\
New York, NY 10012, USA; Department of Mathematics and Statistics, P.O. Box 68, Fin-00014 University of Helsinki, Finland.}
\urladdr{http://www.math.helsinki.fi/mathphys/mikko.html}

\keywords{Sinai Billiards, Lorentz Gas, Anosov Diffeomorphisms, Central Limit Theorem}
\subjclass[2000]{60F05; 37D50, 37D20, 82D05}

\date{June 10, 2009}

\begin{abstract}
For Dynamical Systems, a strong bound on multiple correlations implies the Central Limit Theorem (CLT) \cite{ChernovMarkarian}. In Chernov's paper \cite{Chernov-BilliardsCoupling}, such a bound is derived for dynamically H\"older continuous observables of dispersing Billiards. Here we weaken the regularity assumption and subsequently show that the bound on multiple correlations follows directly from the bound on pair correlations. Thus, a strong bound on pair correlations alone implies the CLT, for a wider class of observables. The result is extended to Anosov diffeomorphisms in any dimension. 
\end{abstract}

\maketitle


\subsection*{Acknowledgement}
The author is partially supported by a fellowship from the Academy of Finland. He is grateful to Lai-Sang Young for her encouragement.


\section{Introduction}
In the context of dynamical systems, the question whether fast decay of pair correlations implies the Central Limit Theorem (CLT) is an open one. Hopes that the answer is positive are not completely without warrant: we have been unable to locate examples in the literature in which pair correlations decay rapidly but the CLT fails. On the other hand, cases in which the CLT has been inferred from little more than a pair correlation bound are few. The purpose of this paper is to investigate the possibility of using information solely regarding pair correlations for claiming control over multiple correlations, and further for obtaining the CLT. In a physically interesting example, we show that a single pair correlation bound implies the CLT for Sinai Billiards in a simple fashion. We also prove a similar result for Anosov diffeomorphisms in arbitrary dimension. 

Let $\cF:\cM\to\cM$ be a dynamical system with an invariant measure $\mu$. Measurable functions $f:\cM\to\bC$ are called observables and their averages are denoted $\average{f}=\int_\cM f\,d\mu$. Recall that a dynamical system is mixing if $\lim_{n\to\infty} \average{f \cdot  g\circ \cF^{n}} =  \average{f}\average{g}$ for all $f,g\in L^2(\mu)$.

For a mixing system, a typical bound on pair correlations is
\beq\label{eq:pair_corr_intro}
| \average{f \cdot  g\circ \cF^{n}} - \average{f}\average{g} | \leq c_{f,g}r_{f,g}(n)
\eeq
for all $n$, given that $f$ and $g$ belong to some suitable classes of observables, $\cH_1$ and $\cH_2$, respectively. Here the constant $c_{f,g}$ usually depends on a few properties of $f$ and $g$ --- such as their norms --- rather than the details of the functions. Moreover, as $n\to \infty$, $r_{f,g}(n)\to 0$ at a rate that only depends on a few properties of $f$ and $g$. For instance, this rate could be exponential and the exponent could depend on the regularity --- say the H\"older exponents --- of the observables. Then, $c_{f,g}$ and $r_{f,g}$ are uniform on proper subclasses of $\cH_i$.

Assume now that $f$ is real-valued and consider sums of the form
\beqn
S_n = \sum_{j=0}^{n-1} f\circ \cF^j.
\eeqn
Recall that the Central Limit Theorem (CLT) states that the distribution of the normalized sequence $S_n/{\sqrt{\Var S_n}}$ tends to the standard Gaussian distribution:
\beq\label{eq:CLT}
\lim_{n\to\infty} \mu\!\left( \frac{S_n}{\sqrt{\Var S_n}} \leq t\right) = \frac{1}{\sqrt {2\pi}}\int_{-\infty}^t e^{-s^2/2}\,ds \qquad \forall\,t\in\bR.
\eeq
If the sequence of auto correlations $C_f(n)=\average{f\cdot f\circ \cF^n}-\average{f}^2$ has a finite first moment, \ie,
\beq\label{eq:corr_sum}
\sum_{n=1}^\infty n|C_f(n)| < \infty,
\eeq
then a direct computation reveals that
\beq\label{eq:CLT_var}
\lim_{n\to\infty}\frac{\Var S_n}{n} = \sigma_f^2 = C_f(0) + 2\sum_{i=1}^{\infty}C_f(i),
\eeq
in which case $\sqrt{\Var S_n}$ can be replaced by $\sqrt{n}\sigma_f$ in \eqref{eq:CLT}. The summability of the auto correlations, at the very least, is necessary for the CLT to hold in this form. Establishing \eqref{eq:pair_corr_intro} has been an essential, but not sufficient, part of CLT proofs in the literature. 

In \cite{ChernovMarkarian}, it is shown that the CLT is actually implied by good decay of \emph{multiple} correlations. The authors give the Sinai Billiards as an application, and deduce the CLT from the strong bound on multiple correlations obtained in \cite{Chernov-BilliardsCoupling}. 

Multiple correlations can generically be viewed as pair correlations: assuming $f_1,\dots,f_k$ are observables and $n\geq 0$ and $0\leq i_1<\dots<i_l<i_{l+1}<\dots < i_k$ are integers, we have
\beqn
\average{f_1\circ \cF^{i_1}\cdots f_l\circ \cF^{i_l} \cdot f_{l+1}\circ \cF^{i_{l+1}+n}\cdots f_k\circ \cF^{i_k+n}} = \average{\tilde f \cdot \tilde g\circ \cF^{n}},
\eeqn
if we define
\beqn
\tilde f = f_1\circ \cF^{i_1}\cdots f_l\circ \cF^{i_l} \quad\text{and}\quad
\tilde g = f_{l+1}\circ \cF^{i_{l+1}}\cdots f_k\circ \cF^{i_k}.
\eeqn
This representation, the choice of $l$ and $n$, is rather arbitrary, but if the time gap $n$ can be chosen large, one may hope to benefit from \eqref{eq:pair_corr_intro} and claim control over multiple correlations in order to verify that the CLT holds. However, even if one is able to prove that $\tilde f\in \cH_1$ and $\tilde g\in \cH_2$, which is not always the case, it may turn out that $c_{\tilde f,\tilde g}$ or $r_{\tilde f,\tilde g}(n)$ is substantially worse than needed for the argument to work. In the following sections, the idea of controlling multiple correlations through pair correlations is developed further. In particular, applications to Billiards and Anosov diffeomorphisms are presented.

Unified approaches for obtaining the CLT for dynamical systems are available in the literature \cite{Chernov-MarkovApprox,Liverani-CLT}. They are general, but of quite different flavor compared to this work. The paper \cite{Chernov-MarkovApprox} has been formulated in terms of mixing properties of partitions of the phase space and is based on Markov approximations. Martingale approximations are used in \cite{Liverani-CLT}. In the invertible case, the latter too requires refined information about partitions, their measurability, existence of conditional measures, and estimates on the size of their elements. See, however, the first paragraph of Section~\ref{sec:pw_exp} discussing the non-invertible case. It is clear that, for the relevant parts, equivalent information must be contained in any CLT proof; in our examples it has surprisingly been encoded into a strong pair correlation bound.

\section{Obtaining the CLT from the decay of multiple correlations}
Here we make precise  the informal statement that sufficient control over \emph{multiple} correlations implies the CLT.  Theorem~\ref{thm:multicorr_CLT} is obtained by rephrasing parts of the proof of Theorem~7.19 in \cite{ChernovMarkarian} to suit our needs. As all the necessary details can be found in \cite{ChernovMarkarian}, we omit them here. We nevertheless mention that the backbone is a clever trick due to Bernstein \cite{Bernstein1927, Bernstein} for approximating sums of weakly dependent random variables by those of independent ones. The CLT is then inferred by checking that a Lindeberg condition on the independent variables is satisfied.

To formulate the theorem, we first partition the time interval $[0,n-1]$ into an alternating sequence of long and short intervals. The long intervals are of length\footnote{Here $[x]$ is the integer part of a number $x$.} $p=[n^a]$ and the short ones are of length $q=[n^b]$ with $0<b<a<\frac 12$. Hence, there are precisely $k=[n/(p+q)]\sim n^{1-a}$ pairs of long and short intervals (and a remaining interval of length $n-k(p+q)<p+q$).
\begin{thm}\label{thm:multicorr_CLT}
Denote $g=e^{itf/\sqrt{k\Var S_p}}$ and, for each $1\leq r\leq k$, 
\beqn
w_r = \left(g\cdot g\circ\cF \cdots g\circ \cF^{p-1}\right) \circ \cF^{(p+q)(r-1)} = w_1 \circ \cF^{(p+q)(r-1)}.
\eeqn 
If \eqref{eq:corr_sum} and
\beq\label{eq:CF_multi}
\lim_{n\to\infty} | \average{w_1\cdots w_k} - \average{w_1}\cdots\average{w_k} | = 0 \qquad \forall\,t\in\bR
\eeq
hold, then the CLT \eqref{eq:CLT} is satisfied together with \eqref{eq:CLT_var}.
\end{thm}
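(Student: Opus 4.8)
The plan is to run the standard Bernstein big-block/small-block scheme, as in the proof of Theorem~7.19 of \cite{ChernovMarkarian}, keeping track of exactly where the two hypotheses are used. At the outset I would assume $\average f=0$ (replacing $f$ by $f-\average f$ changes neither $\Var S_n$ nor $C_f$, and changes the quantity in \eqref{eq:CF_multi} only by a deterministic unimodular factor that cancels) and $\sigma_f^2>0$, the degenerate case being excluded. From \eqref{eq:corr_sum} and the identity $\Var S_m=mC_f(0)+2\sum_{i=1}^{m-1}(m-i)C_f(i)$ one reads off $\Var S_m=m\sigma_f^2+O(1)$, which in particular is \eqref{eq:CLT_var}; taking $m=p=[n^a]$ and using $kp\sim n$, $k\sim n^{1-a}$, this gives $k\Var S_p=n\sigma_f^2+o(n)=\Var S_n+o(n)$, so $k\Var S_p/\Var S_n\to1$. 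By L\'evy's continuity theorem it is then enough to prove, for each fixed $t\in\bR$, that $\average{\exp(itS_n/\sqrt{k\Var S_p})}\to e^{-t^2/2}$.

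The next step is the block decomposition $S_n=T_n+R_n$, where $T_n=\sum_{r=1}^k B_r$ sums $f\circ\cF^j$ over the long blocks (so $B_1=S_p$ and $B_r=B_1\circ\cF^{(p+q)(r-1)}$), while $R_n$ collects the $k$ short blocks together with the leftover interval. The key estimate is $\Var R_n=o(n)$: since $C_f$ is summable, the diagonal contribution is $O(kq+p+q)=O(n^{1-a+b}+n^a)$; the covariance of two \emph{distinct} short blocks involves only $C_f(m)$ with $m\ge p$, so summing over all such pairs (fixing the earlier index and applying the tail bound) gives $O\big(kq\sum_{m\ge p}|C_f(m)|\big)=O(kq\,p^{-1})=O(n^{1-2a+b})$ by \eqref{eq:corr_sum}; and the single unavoidable cross term, between the leftover and the short block adjacent to it, is $O(\sqrt{q(p+q)})=O(n^{(a+b)/2})$ by Cauchy--Schwarz. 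Since $0<b<a<\tfrac12$, each of these is $o(n)$. As $\bE_\mu R_n=0$, it follows that $R_n/\sqrt{k\Var S_p}\to0$ in $L^2(\mu)$, so by Slutsky's theorem it suffices to establish the characteristic-function limit with $T_n$ in place of $S_n$.

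Finally I would bring in the two remaining ingredients. With $g=e^{itf/\sqrt{k\Var S_p}}$ we have $w_r=\exp(itB_r/\sqrt{k\Var S_p})$, hence $\average{w_1\cdots w_k}=\average{\exp(itT_n/\sqrt{k\Var S_p})}$, while $\average{w_r}=\average{w_1}$ for every $r$ by $\cF$-invariance of $\mu$, so $\average{w_1}\cdots\average{w_k}=\average{w_1}^k$ is precisely the characteristic function at $t$ of $(k\Var S_p)^{-1/2}\sum_{r=1}^k\hat B_r$, where $\hat B_1,\dots,\hat B_k$ are i.i.d.\ copies of the random variable $S_p$ distributed under $\mu$. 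Hypothesis \eqref{eq:CF_multi} says exactly that $\average{\exp(itT_n/\sqrt{k\Var S_p})}-\average{w_1}^k\to0$, so the problem reduces to the CLT for this i.i.d.\ triangular array. The sum is already normalized, $\Var\sum_r\hat B_r=k\Var S_p$, so the Lindeberg--Feller theorem applies once one checks the Lindeberg condition $\Var(S_p)^{-1}\bE_\mu\big[S_p^2\,\one_{\{|S_p|>\ve\sqrt{k\Var S_p}\}}\big]\to0$; when $f$ is bounded (as in the applications) this is automatic because $|S_p|\le p\norm{f}_\infty=O(n^a)=o(\sqrt{k\Var S_p})$ makes the indicator vanish for all large $n$, and a mild integrability assumption handles the general case. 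Hence $\average{w_1}^k\to e^{-t^2/2}$, and unwinding the two reductions yields \eqref{eq:CLT} together with \eqref{eq:CLT_var}.

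I do not anticipate a genuinely deep obstacle: the heart of the matter --- the Bernstein comparison of the dependent product $\average{w_1\cdots w_k}$ with the independent product $\average{w_1}\cdots\average{w_k}$ --- is handed to us as hypothesis \eqref{eq:CF_multi}, and the summability \eqref{eq:corr_sum} delivers both \eqref{eq:CLT_var} and the error bound $\Var R_n=o(n)$. The one place that requires care is the block bookkeeping: the exponents must be chosen so that the leftover variance is $o(n)$, which forces $b<a$, while the long blocks stay short enough that the Lindeberg condition is automatic, which forces $a<\tfrac12$; and one must confirm that the normalization $\sqrt{k\Var S_p}$ appearing in the statement is asymptotically interchangeable with $\sqrt{\Var S_n}$. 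Everything else is routine and, as the authors note, spelled out in \cite{ChernovMarkarian}.
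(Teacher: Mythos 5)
Your proposal is correct and follows essentially the same route as the paper, which simply defers to the Bernstein big-block/small-block argument of Theorem~7.19 in \cite{ChernovMarkarian}: the same decomposition $S_n=S_n'+S_n''$, the same reduction to an i.i.d.\ triangular array via \eqref{eq:CF_multi}, and the same Lindeberg check. The only points worth flagging are ones you already flag yourself (the implicit non-degeneracy $\sigma_f^2>0$ and the boundedness of $f$ used for the Lindeberg condition, both of which hold in the paper's applications), so nothing further is needed.
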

\begin{remark}
We stress that $p$, $q$, $k$, $g$, and $w_r$ all depend on $n$. Notice that a time gap of length $q$ separates the variables $w_r$ which are `supported' on the long intervals of length $p$.
\end{remark}

To shed a bit of light on the method of \cite{ChernovMarkarian}, let us denote by $\Delta_r$, $1\leq r\leq k$, the long intervals and split the sum
\beqn
S_n = S_n' + S_n'',
\eeqn
where $S_n'$ and $S_n''$ are the sum over all the long intervals, $\cup_r\Delta_r$, and the remainder, $[0,n-1]\setminus \cup_r\Delta_r$, respectively:
\beqn
S_n' = \sum_{r=1}^k S_p^{(r)} \quad\text{with}\quad S_p^{(r)} = \sum_{i\in\Delta_r} f\circ\cF^i.
\eeqn
Notice that the number of long and short intervals as well as the length of each interval increases as $n$ increases. However, the fraction of the entire time interval $[0,n-1]$ covered by $\cup_r\Delta_r$ tends to 1, because 
\beqn
\lim_{n\to\infty }\frac{p+q}{p} = \lim_{n\to\infty}\frac{[n^a]+[n^b]}{[n^a]} = 1.
\eeqn
Therefore, the sum $S_n''$ can asymptotically be neglected. Moreover, as the variables $f\circ\cF^i$ are weakly dependent and as the gaps between the long intervals $\Delta_r$ increase with $n$, the sums $S_p^{(r)}$ become asymptotically independent. This way, $S_n$ can be approximated by a sum of i.i.d.\@ random variables and the asymptotic normality of $S_n/\sqrt{n}$ be verified.

It appears that \cite{Chernov-BilliardsCoupling,ChernovMarkarian} are the first places where the possibility of obtaining the CLT based solely on a strong bound on multiple correlations has been explicitly mentioned, although Bernstein's trick has been introduced into the study of dynamical systems at least as early as \cite{BunimovichSinai-BM}. We would like to stress that it is not the rate of decay per se, but the \emph{form} of the bound on multiple correlations that counts in the proof appearing in \cite{ChernovMarkarian}; hence our choice to call such bounds strong as opposed to, say, exponential. See Corollary~\ref{cor:multicorr_bound} and Remark~\ref{rem:multicorr_bound}. Indeed, as explained in the Introduction, fast decay of pair correlations could yield fast decaying bounds on multiple correlations which, however, are too weak to be used in conjunction with Theorem~\ref{thm:multicorr_CLT} for deducing the CLT.

We remind the reader once more that each $w_r$ in \eqref{eq:CF_multi} depends on $n$.
Nevertheless, we mimic pair correlation notation by rewriting the multiple correlations in \eqref{eq:CF_multi} as the telescoping sums
\beqn
 \average{w_1\cdots w_k} - \average{w_1}\cdots\average{w_k}
 = \sum_{r=1}^{k-1}\average{w_1}\cdots\average{w_{r-1}} \left[\average{w_r\cdots w_k}-\average{w_r}\average{w_{r+1}\cdots w_k}\right].
\eeqn
Recalling $|\average{w_r}| \leq 1$ and using stationarity, we obtain
\begin{cor}\label{cor:corr_sum_CLT}
Denote $W_r = w_1\cdots w_{r-1}$. If \eqref{eq:corr_sum} and
\beqn
\lim_{n\to\infty} \sum_{r=2}^{k} \left | \average{w_1 \cdot W_r\circ \cF^{p+q}}-\average{w_1}\average{W_r} \right |=0  \qquad \forall\,t\in\bR
\eeqn
hold, then the CLT \eqref{eq:CLT} is satisfied together with \eqref{eq:CLT_var}.
\end{cor}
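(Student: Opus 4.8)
The plan is to derive the corollary directly from Theorem~\ref{thm:multicorr_CLT}: I will show that the single limit assumed in the corollary forces the multiple-correlation condition~\eqref{eq:CF_multi}, after which the conclusion is immediate. The bridge is the telescoping identity displayed just above the statement, so the task is to bound its right-hand side and to recognize each resulting summand as one of the pair-correlation-type differences appearing in the corollary.

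First I would note that, since $f$ is real-valued, $g=e^{itf/\sqrt{k\Var S_p}}$ has modulus $1$; hence $|w_r|\le 1$ pointwise and so $|\average{w_r}|\le 1$ for every $r$. Bounding the telescoping prefactors $\average{w_1}\cdots\average{w_{r-1}}$ in absolute value by $1$ and applying the triangle inequality yields
\beqn
\left| \average{w_1\cdots w_k} - \average{w_1}\cdots\average{w_k} \right| \leq \sum_{r=1}^{k-1}\left| \average{w_r\cdots w_k} - \average{w_r}\average{w_{r+1}\cdots w_k} \right|.
\eeqn
Next I would use the $\cF$-invariance of $\mu$ together with $w_j = w_1\circ\cF^{(p+q)(j-1)}$ to simplify the $r$-th summand. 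Pulling the common shift $\cF^{(p+q)(r-1)}$ out of $w_r\cdots w_k$ gives, by stationarity, $\average{w_r\cdots w_k} = \average{w_1\cdots w_{k-r+1}}$; similarly $\average{w_r}=\average{w_1}$ and $\average{w_{r+1}\cdots w_k}=\average{w_1\cdots w_{k-r}}=\average{W_{k-r+1}}$. Finally, peeling one more shift $\cF^{p+q}$ off of $w_2\cdots w_{k-r+1}$ and using $w_1\cdots w_{k-r}=W_{k-r+1}$ gives $\average{w_1\cdots w_{k-r+1}} = \average{w_1\cdot W_{k-r+1}\circ\cF^{p+q}}$. Thus the $r$-th summand equals $|\average{w_1\cdot W_{k-r+1}\circ\cF^{p+q}}-\average{w_1}\average{W_{k-r+1}}|$, and as $r$ runs over $1,\dots,k-1$ the index $k-r+1$ runs over $2,\dots,k$, so the right-hand side above is exactly the sum assumed to vanish in the limit. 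Hence~\eqref{eq:CF_multi} holds, and together with~\eqref{eq:corr_sum} Theorem~\ref{thm:multicorr_CLT} gives the CLT~\eqref{eq:CLT} and~\eqref{eq:CLT_var}.

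There is no real analytic obstacle here; the argument is entirely reductive. The only points needing care are the bookkeeping in the re-indexing --- counting the factors in each product $w_r\cdots w_k$, tracking how the shifts $\cF^{(p+q)(j-1)}$ compose, and matching $W_{k-r+1}$ against the definition $W_m=w_1\cdots w_{m-1}$ --- and the observation that it is precisely the bound $|\average{w_r}|\le 1$, uniform in $n$, that licenses discarding the telescoping prefactors. Everything else is inherited from Theorem~\ref{thm:multicorr_CLT}.
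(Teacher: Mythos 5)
Your proof is correct and follows exactly the paper's own route: the telescoping identity displayed above the corollary, the bound $|\average{w_r}|\le 1$ to discard the prefactors, and stationarity (via $w_j = w_1\circ\cF^{(p+q)(j-1)}$) to re-index each summand as a term of the hypothesized sum, after which Theorem~\ref{thm:multicorr_CLT} applies. The re-indexing bookkeeping ($k-r+1$ running over $2,\dots,k$, and $w_2\cdots w_{k-r+1} = W_{k-r+1}\circ\cF^{p+q}$) checks out.
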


A message of the present paper is that, while fast decay of pair correlations alone may not suffice for the CLT to hold, \emph{detailed information about their structure sometimes will}. Below we will show that two interesting classes of dynamical systems, Sinai Billiards (see \cite{ChernovDolgopyatICM} for applications) and Anosov diffeomorphisms, actually both admit a strong pair correlation bound that yields the CLT directly by supplying strong enough bounds on multiple correlations. As far as the author knows, these are the first examples of the kind.


\section{Sinai Billiards}\label{sec:billiards}
Here Sinai Billiards \cite{Sinai70} refers to the 2D periodic Lorentz gas with dispersing scatterers and finite horizon (finite free path). We will only list some facts about such systems, but the reader unfamiliar with Billiards should be able to follow the reasoning by taking the estimates in this section for granted. For background, see \cite{ChernovMarkarian,Szasz,Tabachnikov}. 

Recall that the dynamics of Sinai Billiards induces a billiard map $\cF:\cM\to\cM$ on the collision space $\cM$, which preserves a smooth ergodic SRB measure $\mu$. This map is uniformly hyperbolic but has a set of singularities due to tangential collisions. In a standard representation of the collision space, tangential collisions correspond to horizontal lines --- the boundary $\cS_0$ of $\cM$. The map also suffers of unbounded distortion because of the same reason. In order to deal with this nuisance, the space $\cM$ is divided by countably many horizontal lines into a disjoint union of strips \cite{BunimovichSinaiChernov91, Chernov-BilliardsCoupling}, on each of which distortions can be controlled. Let us denote the union of such lines $\bS$. This way the space is divided  by $\cS_0\cup\bS$ into countably many connected components which we call homogeneity strips or briefly H-strips.

There are two special families of cones associated with billiards. The unstable cones are invariant under $\cF$ and the stable ones under $\cF^{-1}$. A smooth curve is called stable if at every point its tangent vector belongs to the stable cone. The H-strips divide any stable curve into disjoint \emph{H-components}. A stable curve $W$ is a stable manifold if $\cF^n W$ is a stable curve for all $n>0$. Notice that the image $\cF^n W$ of a stable manifold $W$ shrinks as $n$ increases, but may well consist of several H-components.  A stable manifold $W$ is a homogeneous stable manifold, if $\cF^n W$ has just one H-component for all $n>0$. Unstable curves, unstable manifolds, and homogeneous unstable manifolds are defined analogously by considering unstable cones and backward iterates of $\cF$. 

For all $x,y$, we define the \emph{future separation time}
\beqn
s_+(x,y) = \min\{n\geq 0\,:\, \text{$\cF^nx$ and $\cF^ny$ lie in different H-strips}\}
\eeqn
and the \emph{past separation time}
\beqn
s_-(x,y) = \min\{n\geq 0\,:\, \text{$\cF^{-n}x$ and $\cF^{-n}y$ lie in different H-strips}\}.
\eeqn

We will now introduce two notions of regularity.
We say $f$ is \emph{dynamically H\"older continuous on homogeneous unstable manifolds} and write $f \in \cH^+_\star$, if there exist $K_f\geq 0$ and $\vartheta_f \in (0,1)$ such that, for any homogeneous unstable manifold $W^u$,
\beq\label{eq:dyn_Holder_+}
|f(x)-f(y)| \leq K_f \vartheta_f^{s_+(x,y)}\qquad \forall\, x,y\in W^u.
\eeq
Similarly, we say $f$ is \emph{dynamically H\"older continuous on homogeneous stable manifolds} and write $f\in \cH^-_\star$, if there exist $K_f\geq 0$ and $\vartheta_f \in (0,1)$ such that, for any homogeneous stable manifold $W^s$,
\beq\label{eq:dyn_Holder_-}
|f(x)-f(y)| \leq K_f \vartheta_f^{s_-(x,y)}\qquad \forall\, x,y\in W^s.
\eeq
To make comparisons with \cite{Chernov-BilliardsCoupling} in the following, we denote $f\in \cH^+$ if \eqref{eq:dyn_Holder_+} holds on all unstable \emph{curves} $W^u$ and $f\in \cH^-$ if \eqref{eq:dyn_Holder_-} holds on all stable \emph{curves} $W^s$.

Obviously, $\cH_\star^\pm$ are vector spaces. If $f,g\in\cH_\star^\pm$ are bounded, then $fg\in\cH_\star^\pm$ with
\beq\label{eq:prod_Holder}
K_{fg} = {\norm{f}}_\infty K_g + K_f {\norm{g}}_\infty\quad\text{and}\quad \vartheta_{fg} = \max(\vartheta_f,\vartheta_g).
\eeq
The classes $\cH^{\pm}_\star$ enjoy the following stability property under the action of $\cF$:
\begin{lem}\label{lem:stability}
If $f\in\cH^-_\star$, then $f\circ \cF\in\cH^-_\star$ with
\beqn
\vartheta_{f\circ \cF} = \vartheta_f \quad\text{and}\quad K_{f\circ \cF} = K_f\vartheta_f.
\eeqn
If $f\in\cH^+_\star$, then $f\circ \cF^{-1}\in\cH^+_\star$ with
\beqn
\vartheta_{f\circ \cF^{-1}} = \vartheta_f \quad\text{and}\quad K_{f\circ \cF^{-1}} = K_f\vartheta_f.
\eeqn
\end{lem}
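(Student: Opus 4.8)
The plan is to reduce both assertions to a single structural observation: \emph{if $W^s$ is a homogeneous stable manifold, then so is $\cF W^s$, and moreover $\cF W^s$ consists of a single H-component}. The latter is immediate from the definition of a homogeneous stable manifold applied with $n=1$: $\cF W^s = \cF^1 W^s$ has exactly one H-component, hence lies entirely inside one H-strip. That $\cF W^s$ is itself a homogeneous stable manifold follows because it is a stable curve (take $n=1$ in the definition of stable manifold), each forward iterate $\cF^n(\cF W^s) = \cF^{n+1}W^s$ is a stable curve, and $\cF^n(\cF W^s) = \cF^{n+1}W^s$ has just one H-component for every $n>0$ since $\cF^m W^s$ does for every $m>0$. (Implicitly one uses that $\cF$ restricted to $W^s$ is a genuine diffeomorphism onto its image, stable manifolds being disjoint from the singularity set.)

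Next I would record the elementary separation-time identity: \emph{if $\cF x$ and $\cF y$ lie in the same H-strip, then $s_-(\cF x,\cF y) = s_-(x,y)+1$}. Indeed, writing $\cF^{-n}(\cF x) = \cF^{-(n-1)}x$, the pair $\cF^{-n}(\cF x),\cF^{-n}(\cF y)$ lies in the same H-strip for $n=0$ by hypothesis, and for $n\geq 1$ the condition ``$\cF^{-(n-1)}x$ and $\cF^{-(n-1)}y$ lie in different H-strips'' first occurs precisely at $n-1 = s_-(x,y)$, i.e.\ at $n = s_-(x,y)+1$. Now let $f\in\cH^-_\star$ and let $W^s$ be any homogeneous stable manifold. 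For $x,y\in W^s$, the points $\cF x,\cF y$ lie in the common H-strip containing $\cF W^s$, which is a homogeneous stable manifold; applying \eqref{eq:dyn_Holder_-} for $f$ on $\cF W^s$ gives
\[
|f(\cF x)-f(\cF y)| \leq K_f\,\vartheta_f^{\,s_-(\cF x,\cF y)} = K_f\,\vartheta_f^{\,s_-(x,y)+1} = (K_f\vartheta_f)\,\vartheta_f^{\,s_-(x,y)},
\]
which is exactly \eqref{eq:dyn_Holder_-} for $f\circ\cF$ with $\vartheta_{f\circ\cF}=\vartheta_f$ and $K_{f\circ\cF}=K_f\vartheta_f$.

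The statement for $f\in\cH^+_\star$ and $f\circ\cF^{-1}$ is obtained by the time-reversed argument: replace $\cF$ by $\cF^{-1}$, ``stable'' by ``unstable'', and $s_-$ by $s_+$; one uses that $\cF^{-1}W^u$ is a homogeneous unstable manifold consisting of a single H-component, together with the identity $s_+(\cF^{-1}x,\cF^{-1}y)=s_+(x,y)+1$ when $\cF^{-1}x,\cF^{-1}y$ share an H-strip, and then invokes \eqref{eq:dyn_Holder_+}. I do not anticipate a genuine obstacle; the one point deserving care is checking that the class of homogeneous stable (resp.\ unstable) manifolds is closed under $\cF$ (resp.\ $\cF^{-1}$) — that is, that the index shift in ``$\cF^nW$ has one H-component for all $n>0$'' goes in the favorable direction — everything else being routine bookkeeping with separation times.
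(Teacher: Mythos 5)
Your proof is correct and follows the same route as the paper's: observe that $\cF W^s$ is again a homogeneous stable manifold (lying in a single H-strip), deduce $s_-(\cF x,\cF y)=s_-(x,y)+1$, apply the defining inequality on $\cF W^s$, and handle the $\cH^+_\star$ case by time reversal. The only difference is that you spell out the index-shift and single-H-component checks that the paper leaves implicit.
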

\begin{proof}
Let $f\in\cH^-_\star$. Assume that $W^s$ is a homogeneous stable manifold. If $x,y\in W^s$, then $\cF x,\cF y\in \cF W^s$ which is also a homogeneous stable manifold. Hence, $s_-(\cF x,\cF y) = 1+s_-(x,y)$ and
\beqn
|f(\cF x)-f(\cF y)|\leq K_f \vartheta_f^{s_-(\cF x,\cF y)} = (K_f\vartheta_f)\vartheta_f^{s_-(x, y)}.
\eeqn
The case $f\in\cH^+_\star$ follows by reversing time.
\end{proof}
\begin{remark}
The proof of the $\cH^-_\star$ part of Lemma~\ref{lem:stability} relies on the fact that regularity is only required on \emph{homogeneous stable manifolds}. In \cite{Chernov-BilliardsCoupling}, regularity on all stable \emph{curves} is required, which results in the class $\cH^-$ of more regular functions. However, the $\cF$-image of a stable curve is not necessarily a stable curve but rather tends to align with the unstable direction. One may then consider relaxing the regularity condition and restricting to  stable \emph{manifolds} or H-components of stable manifolds\footnote{Such a case is recovered by collapsing the invariant cones into invariant lines \cite{Chernov-BilliardsCoupling}.}. However, while the $\cF$-image $\cF W$ of such an H-component $W$ is stable, it may have several H-components, in which case $s_-(\cF x,\cF y)=0$ for some $x,y\in W$. The remedy is to give up more regularity and to consider the class $\cH^-_\star$, \ie, observables that are dynamically H\"older continuous on homogeneous stable manifolds.

Similar remarks apply for the $\cH^+_\star$ part.
\end{remark}

\begin{cor}\label{cor:dyn_holder_invariant}
Suppose $\tilde f = f_0\circ \cF^{i_0} \cdots f_k\circ \cF^{i_k}$, where $0\leq i_0<\dots<i_k$ and each $f_i\in\cH^-_\star$ is bounded. Setting $K_{\{f_i\}} = \max_i K_{f_i}$ and $\vartheta_{\{f_i\}} = \max_i\vartheta_{f_i}$, we have $\tilde f\in \cH^-_\star$ with
\beqn
\vartheta_{\tilde f} = \vartheta_{\{f_i\}} \quad\text{and}\quad K_{\tilde f}= K_{\{f_i\}} \frac{\prod_{i} \norm{f_i}_\infty } {\min_i \norm{f_i}_\infty } \frac{\vartheta_{\{f_i\}}^{i_0}}{1-\vartheta_{\{f_i\}}}.
\eeqn
Similarly, if each $f_i\in\cH^+_\star$ is bounded, then $\tilde f = f_0\circ \cF^{-i_0} \cdots f_k\circ \cF^{-i_k}\in\cH^+_\star$ with $\vartheta_{\tilde f}$ and $K_{\tilde f}$ as above.
\end{cor}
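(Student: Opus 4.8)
The plan is to proceed in two stages: first reduce the composed product $\tilde f$ to an ordinary product of functions lying in $\cH^-_\star$ by iterating Lemma~\ref{lem:stability}, and then merge the factors one at a time with the product rule \eqref{eq:prod_Holder}, keeping track of the constants and summing a geometric series at the end. Concretely, I would set $g_j = f_j\circ\cF^{\,i_j}$ for $0\le j\le k$. Applying the $\cH^-_\star$ half of Lemma~\ref{lem:stability} exactly $i_j$ times yields $g_j\in\cH^-_\star$ with $\vartheta_{g_j}=\vartheta_{f_j}$ and $K_{g_j}=K_{f_j}\vartheta_{f_j}^{\,i_j}$, and each $g_j$ is bounded with $\norm{g_j}_\infty\le\norm{f_j}_\infty$. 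Thus $\tilde f = g_0 g_1\cdots g_k$ is a finite product of bounded elements of $\cH^-_\star$.

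Next, by induction on the number of factors, applying \eqref{eq:prod_Holder} to the partial product $g_0\cdots g_{m-1}$ together with $g_m$ — and using the submultiplicativity $\norm{g_0\cdots g_m}_\infty\le\prod_{i=0}^{m}\norm{g_i}_\infty$, which tames the sup norm appearing in \eqref{eq:prod_Holder} at each step — one obtains that $\tilde f\in\cH^-_\star$ is admissible with
\beqn
\vartheta_{\tilde f}=\max_{0\le j\le k}\vartheta_{g_j}=\vartheta_{\{f_i\}}
\qquad\text{and}\qquad
K_{\tilde f}=\sum_{j=0}^{k}\Bigl(\prod_{i\ne j}\norm{g_i}_\infty\Bigr)K_{g_j}.
\eeqn
It then remains to bound the right-hand side. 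From $\vartheta_{f_j}\le\vartheta_{\{f_i\}}<1$ one gets $K_{g_j}=K_{f_j}\vartheta_{f_j}^{\,i_j}\le K_{\{f_i\}}\,\vartheta_{\{f_i\}}^{\,i_j}$, while $\prod_{i\ne j}\norm{g_i}_\infty\le\prod_{i\ne j}\norm{f_i}_\infty = \prod_i\norm{f_i}_\infty/\norm{f_j}_\infty\le \prod_i\norm{f_i}_\infty/\min_i\norm{f_i}_\infty$, whence
\beqn
K_{\tilde f}\le K_{\{f_i\}}\,\frac{\prod_i\norm{f_i}_\infty}{\min_i\norm{f_i}_\infty}\sum_{j=0}^{k}\vartheta_{\{f_i\}}^{\,i_j}
\le K_{\{f_i\}}\,\frac{\prod_i\norm{f_i}_\infty}{\min_i\norm{f_i}_\infty}\sum_{m=i_0}^{\infty}\vartheta_{\{f_i\}}^{\,m}
= K_{\{f_i\}}\,\frac{\prod_i\norm{f_i}_\infty}{\min_i\norm{f_i}_\infty}\,\frac{\vartheta_{\{f_i\}}^{\,i_0}}{1-\vartheta_{\{f_i\}}},
\eeqn
using in the middle step that $i_0<i_1<\dots<i_k$ are distinct integers, all at least $i_0$. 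Since any valid upper bound may serve as the Hölder constant, this is exactly the asserted $K_{\tilde f}$, proving the $\cH^-_\star$ statement. The $\cH^+_\star$ statement follows verbatim upon replacing $\cF$ by $\cF^{-1}$ throughout and invoking the second half of Lemma~\ref{lem:stability}.

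I do not anticipate a genuine obstacle; the argument is bookkeeping built on Lemma~\ref{lem:stability} and \eqref{eq:prod_Holder}. The only points requiring care are that the product rule must be reapplied to the \emph{growing} partial product at each inductive step — which is what forces the submultiplicative estimate on $\norm{\cdot}_\infty$ — and the observation that the closed form $\vartheta_{\{f_i\}}^{\,i_0}/(1-\vartheta_{\{f_i\}})$ appears precisely because the times $i_j$ are strictly increasing integers, so their exponential contributions are dominated by the single geometric tail starting at $i_0$.
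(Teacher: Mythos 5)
Your proof is correct and follows exactly the route the paper takes: its proof of this corollary is the one-line ``Follows by induction from \eqref{eq:prod_Holder} and Lemma~\ref{lem:stability}'', and your argument is precisely that induction carried out with the bookkeeping (iterating the stability lemma to get $K_{f_j\circ\cF^{i_j}}=K_{f_j}\vartheta_{f_j}^{i_j}$, merging factors via the product rule, and summing the geometric tail over the distinct times $i_j\ge i_0$). Nothing to add.
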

\begin{proof}
Follows by induction from \eqref{eq:prod_Holder} and Lemma~\ref{lem:stability}.
\end{proof}

Here we present a strengthened version of Theorem 4.3 of \cite{Chernov-BilliardsCoupling}. $\vartheta_\Upsilon<1$, $\kappa>0$, and $C_0> 0$ are constants whose definitions can be found in that paper. The proof is at the end of the section.
\begin{thm}\label{thm:chernov_pair_correlation}
For every bounded pair $f\in\cH^+_\star$, $g\in\cH^-_\star$, and $n\geq 0$,
\beqn
| \average{f \cdot  g\circ \cF^{n}} - \average{f}\average{g} | \leq B_{f,g}\theta_{f,g}^n,
\eeqn
where $\average{\,\cdot\,}$ denotes the $\mu$-integral,
\beq\label{eq:theta_def}
\theta_{f,g} = \left[ \max\{\vartheta_\Upsilon,\vartheta_f,\vartheta_g,e^{-1/\kappa}\} \right]^{1/4} < 1,
\eeq
and
\beqn
B_{f,g} = C_0(K_f\norm{g}_\infty+\norm{f}_\infty K_g+\norm{f}_\infty\norm{g}_\infty).
\eeqn
\end{thm}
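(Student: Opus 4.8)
The plan is to revisit the coupling proof of Theorem~4.3 in \cite{Chernov-BilliardsCoupling} and to check that at every step where the regularity of $f$ or $g$ is invoked, only dynamical H\"older continuity on \emph{homogeneous} (un)stable manifolds is used --- that is, only $f\in\cH^+_\star$ and $g\in\cH^-_\star$. By bilinearity of the correlation we may take $f,g$ real-valued. Put $A=\average{f+\norm f_\infty}\in[0,2\norm f_\infty]$. If $A=0$ then $f$ is $\mu$-a.e.\ constant and the correlation vanishes identically; otherwise $\bar f=(f+\norm f_\infty)/A$ is a probability density and $\average{f\cdot g\circ\cF^n}-\average f\average g = A\bigl(\average{\bar f\cdot g\circ\cF^n}-\average g\bigr)$, so it suffices to estimate the second factor. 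Since $\cH^+_\star$ is a vector space containing the constants, $\bar f\in\cH^+_\star$ with $\vartheta_{\bar f}=\vartheta_f$, $K_{\bar f}=K_f/A$ and $\norm{\bar f}_\infty\le 2\norm f_\infty/A$. The measure $\bar f\,d\mu$ disintegrates into standard pairs carried by homogeneous unstable manifolds exactly as $\mu$ does, the only change being that each conditional density is multiplied by $\bar f$ restricted to the corresponding curve; because $f\in\cH^+_\star$, this restriction is $\vartheta_f^{s_+}$-H\"older, which is precisely what keeps the resulting densities uniformly regular in the sense of \cite{Chernov-BilliardsCoupling}, with constants controlled linearly by $K_f/A$, $\norm f_\infty/A$ and $\vartheta_f$.

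With this in place, apply the coupling lemma of \cite{Chernov-BilliardsCoupling} to $\bar f\,d\mu$ and $\mu$: on a probability space $(\Omega,\bP)$ there are trajectories $(x_i)_{i\ge0}$, $(\bar x_i)_{i\ge0}$ with $x_0\sim\bar f\,d\mu$, $\bar x_0\sim\mu$, $x_i=\cF^ix_0$, $\bar x_i=\cF^i\bar x_0$, and a coupling time $\Upsilon:\Omega\to\bN$ such that for every $i\ge\Upsilon$ the points $x_i$ and $\bar x_i$ lie on one and the same \emph{homogeneous} stable manifold, in particular in the same H-strip, while $\bP(\Upsilon>m)$ is bounded --- up to a polynomial factor in $m$ and a prefactor at most linear in the regularity data of $\bar f$ --- by $[\max\{\vartheta_\Upsilon,\vartheta_f,e^{-1/\kappa}\}]^m$. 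Writing $\average{\bar f\cdot g\circ\cF^n}-\average g=\bE[g(x_n)-g(\bar x_n)]$ and splitting the expectation over $\{\Upsilon>n\}$ and $\{\Upsilon\le n\}$: on the former event, bound crudely by $2\norm g_\infty\one_{\{\Upsilon>n\}}$; on the latter, for $0\le j\le n-\Upsilon$ the points $\cF^{-j}x_n=x_{n-j}$ and $\cF^{-j}\bar x_n=\bar x_{n-j}$ lie on a common homogeneous stable manifold, hence in the same H-strip, so that $s_-(x_n,\bar x_n)\ge n-\Upsilon+1$, and $g\in\cH^-_\star$ together with \eqref{eq:dyn_Holder_-} gives $|g(x_n)-g(\bar x_n)|\le K_g\vartheta_g^{\,n-\Upsilon}$. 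Hence
\beqn
\bigl|\average{\bar f\cdot g\circ\cF^n}-\average g\bigr|\le 2\norm g_\infty\,\bP(\Upsilon>n)+K_g\,\vartheta_g^{\,n}\,\bE\!\left[\vartheta_g^{-\Upsilon}\one_{\{\Upsilon\le n\}}\right].
\eeqn
Summing the geometric series in the last expectation against the exponential tail of $\Upsilon$ bounds the right-hand side by $C'(K_g+\norm g_\infty)\,\theta^n$, with $C'$ at most linear in the regularity data of $\bar f$, for any fixed $\theta$ strictly between $\max\{\vartheta_\Upsilon,\vartheta_f,\vartheta_g,e^{-1/\kappa}\}$ and $1$; the choice $\theta=\theta_{f,g}$ of \eqref{eq:theta_def} conveniently absorbs the polynomial-in-$n$ prefactors and the finitely many time blocks into which \cite{Chernov-BilliardsCoupling} divides $[0,n]$. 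Multiplying back by $A\le 2\norm f_\infty$ and collecting the $f$- and $g$-dependence then yields a bound of the shape $C_0(K_f\norm g_\infty+\norm f_\infty K_g+\norm f_\infty\norm g_\infty)\,\theta_{f,g}^{\,n}$, which is the assertion.

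The substantive point --- the one that upgrades Chernov's statement from $\cH^\pm$ to $\cH^\pm_\star$ --- is that in the argument above regularity of $f$ is needed only on the homogeneous unstable manifolds carrying standard pairs, and regularity of $g$ only on the homogeneous stable manifolds produced by the coupling; continuity estimates on non-homogeneous curves never enter. This is intrinsic to the coupling construction of \cite{Chernov-BilliardsCoupling}, whose invariant lamination consists precisely of homogeneous stable and unstable manifolds --- indeed this is the reason the homogeneity strips are introduced there --- so the verification reduces to re-reading that construction with the weaker hypotheses in force. Accordingly, I expect the main obstacle to be bookkeeping rather than conceptual: one must confirm that the regularity constants of the perturbed standard-pair densities, and with them the prefactors in the tail of $\Upsilon$, are controlled linearly by $K_f$ and $\norm f_\infty$, so that after multiplication by $A\le2\norm f_\infty$ exactly the three-term constant $B_{f,g}$ is recovered (no mixed term $K_fK_g$ surviving), and that the tail estimate of \cite{Chernov-BilliardsCoupling} together with its polynomial factors collapses into the geometric bound with base $\theta_{f,g}$ displayed in \eqref{eq:theta_def}.
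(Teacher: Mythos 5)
Your proposal is correct and follows essentially the same route as the paper: both revisit Chernov's coupling proof and observe that the coupled points lie on a common \emph{homogeneous} stable manifold (so $g\in\cH^-_\star$ suffices there), while the density $\bar f$ built from $f$ is only ever evaluated along the homogeneous unstable manifolds carrying standard pairs (so $f\in\cH^+_\star$ suffices). The paper's version is terser --- it simply points to the estimate (4.4) in the proof of Theorem~4.1, the inequality (4.10), and the quantity $\delta_\alpha$ below (4.11) in \cite{Chernov-BilliardsCoupling} as the only places where regularity enters --- but the substance and the bookkeeping of the constants are the same as in your sketch.
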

In \cite{Chernov-BilliardsCoupling} the formulation of the theorem requires $f,g\in\cH^-\cap\cH^+$ but it is remarked that the proof requires only the weaker assumption $f\in\cH^+$ and $g\in\cH^-$. We have further relaxed the assumptions. This is an important point to us, as will now become apparent.
Owing to the stability properties of the classes $\cH^-_\star$ and $\cH^+_\star$ stated in Corollary~\ref{cor:dyn_holder_invariant}, Theorem~\ref{thm:chernov_pair_correlation} immediately implies the following bound on \emph{multiple correlations}. Generalization to nonconsecutive times is easy.
\begin{cor}\label{cor:multicorr_bound}
Let $\tilde f = f_0 \cdot f_1\circ\cF^{-1} \cdots f_r\circ \cF^{-r}$ and $\tilde g = g_0\cdot g_1\circ \cF^{1} \cdots g_k\circ \cF^{k}$, where $f_0,\dots, f_r\in \cH^+_\star$ and $g_0,\dots,g_k\in \cH^-_\star$ are bounded and have identical parameters\footnote{This can always be arranged by scaling the functions and choosing the weakest parameters. It is also a simple task to modify the statement so as to remove this condition. This does not serve our purpose here.} in the sense that $\vartheta_{f_i}=\vartheta_{f_0}$, $K_{f_i}=K_{f_0}$, $\norm{f_i}_\infty = \norm{f_0}_\infty$, $\vartheta_{g_i}=\vartheta_{g_0}$, $K_{g_i}=K_{g_0}$, and $\norm{g_i}_\infty = \norm{g_0}_\infty$. Then
\beqn
\left | \average{\tilde f \cdot  \tilde g\circ \cF^{n}} - \average{\tilde f}\average{\tilde g} \right | \leq B_{\tilde f,\tilde g}\theta_{f_0,g_0}^n
\eeqn
for all $n\geq 0$, where $\theta_{f_0,g_0}$ is as in \eqref{eq:theta_def} and
\beqn
B_{\tilde f, \tilde g} = C_0\norm{f_0}_\infty^r \norm{g_0}_\infty^k \left(\frac{K_{f_0}}{1-\vartheta_{f_0}}\norm{g_0}_\infty+\norm{f_0}_\infty \frac{K_{g_0}}{1-\vartheta_{g_0}}+\norm{f_0}_\infty\norm{g_0}_\infty\right).
\eeqn
\end{cor}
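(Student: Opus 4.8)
The plan is to apply Theorem~\ref{thm:chernov_pair_correlation} directly to the pair $(\tilde f,\tilde g)$, once we have identified admissible dynamical H\"older constants for $\tilde f$ and $\tilde g$. First I would invoke the $\cH^+_\star$ half of Corollary~\ref{cor:dyn_holder_invariant} with $i_j=j$ for $j=0,\dots,r$. Because $i_0=0$, the factor $\vartheta_{\{f_i\}}^{i_0}$ equals $1$, and the ``identical parameters'' hypothesis collapses $K_{\{f_i\}}$ to $K_{f_0}$, $\vartheta_{\{f_i\}}$ to $\vartheta_{f_0}$, and $\prod_i\norm{f_i}_\infty / \min_i\norm{f_i}_\infty$ to $\norm{f_0}_\infty^{\,r}$ (there are $r+1$ factors $f_0,\dots,f_r$). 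This gives valid constants $\vartheta_{\tilde f}=\vartheta_{f_0}$ and $K_{\tilde f}=K_{f_0}\norm{f_0}_\infty^{\,r}/(1-\vartheta_{f_0})$; by the symmetric statement, $\tilde g\in\cH^-_\star$ with $\vartheta_{\tilde g}=\vartheta_{g_0}$ and $K_{\tilde g}=K_{g_0}\norm{g_0}_\infty^{\,k}/(1-\vartheta_{g_0})$. I would also record the crude sup-norm bounds $\norm{\tilde f}_\infty\le\norm{f_0}_\infty^{\,r+1}$ and $\norm{\tilde g}_\infty\le\norm{g_0}_\infty^{\,k+1}$.

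Next I would feed these into Theorem~\ref{thm:chernov_pair_correlation}. The exponent is immediate: $\theta_{\tilde f,\tilde g}$ depends on the observables only through $\vartheta_{\tilde f}$ and $\vartheta_{\tilde g}$, and since these equal $\vartheta_{f_0}$ and $\vartheta_{g_0}$, the maximum in \eqref{eq:theta_def} is unchanged, so $\theta_{\tilde f,\tilde g}=\theta_{f_0,g_0}$ exactly. For the prefactor I would substitute the constants above into $B_{\tilde f,\tilde g}=C_0(K_{\tilde f}\norm{\tilde g}_\infty+\norm{\tilde f}_\infty K_{\tilde g}+\norm{\tilde f}_\infty\norm{\tilde g}_\infty)$, apply the sup-norm bounds, and factor out $\norm{f_0}_\infty^{\,r}\norm{g_0}_\infty^{\,k}$; the three terms then become $\tfrac{K_{f_0}}{1-\vartheta_{f_0}}\norm{g_0}_\infty$, $\norm{f_0}_\infty\tfrac{K_{g_0}}{1-\vartheta_{g_0}}$, and $\norm{f_0}_\infty\norm{g_0}_\infty$, which is precisely the claimed $B_{\tilde f,\tilde g}$. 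Here I would remark that Theorem~\ref{thm:chernov_pair_correlation} stays valid when the optimal dynamical H\"older constants and the exact sup norm are replaced by any larger admissible values, since its right-hand side is monotone in $K_f,K_g,\norm{f}_\infty,\norm{g}_\infty$; this is what licenses the use of the (possibly non-sharp) constants produced by Corollary~\ref{cor:dyn_holder_invariant} and the product rule \eqref{eq:prod_Holder}.

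For the parenthetical remark about nonconsecutive times, I would run the same argument but with the general form of Corollary~\ref{cor:dyn_holder_invariant}, i.e.\ arbitrary $0\le i_0<i_1<\dots$; the only difference is that the factor $\vartheta_{\{f_i\}}^{i_0}\le 1$ need not be $1$, so it can only improve the bound, and the conclusion holds verbatim with the same $\theta_{f_0,g_0}$.

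There is no genuinely hard step here: the entire argument is bookkeeping layered on Corollary~\ref{cor:dyn_holder_invariant}, Theorem~\ref{thm:chernov_pair_correlation}, and \eqref{eq:prod_Holder}. The only point requiring care is matching exponents ($r$ versus $r+1$, $k$ versus $k+1$) when passing from $\prod_i\norm{f_i}_\infty$ to the factored constant, together with using the $i_0=0$ simplification so that $B_{\tilde f,\tilde g}$ emerges in exactly the stated closed form rather than with a spurious extra factor.
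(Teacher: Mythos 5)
Your proposal is correct and follows exactly the route the paper intends: the paper states that Corollary~\ref{cor:multicorr_bound} is ``immediately implied'' by Theorem~\ref{thm:chernov_pair_correlation} via the stability properties in Corollary~\ref{cor:dyn_holder_invariant}, which is precisely the bookkeeping you carry out. Your constants $K_{\tilde f}=K_{f_0}\norm{f_0}_\infty^{r}/(1-\vartheta_{f_0})$, $\norm{\tilde f}_\infty\le\norm{f_0}_\infty^{r+1}$ (and their analogues for $\tilde g$) reproduce the stated $B_{\tilde f,\tilde g}$ and $\theta_{f_0,g_0}$ exactly, and your monotonicity remark correctly justifies using non-sharp admissible constants.
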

\begin{remark}\label{rem:multicorr_bound}
The reader should pay attention to the form of the prefactor $B_{\tilde f,\tilde g}$. Any growth with $r$ and $k$ is associated with the norms $\norm{\,\cdot\,}_{\infty}$, which measure \emph{size}, not with the dynamical H\"older constants $K_{\cdot}$, which measure \emph{regularity}. Moreover, the rate of decay remains under control (in fact unchanged) as $r$ and $k$ increase.
\end{remark}
Corollary~\ref{cor:multicorr_bound} strengthens Theorem~4.5 of \cite{Chernov-BilliardsCoupling} regarding the regularity assumption.
We are in position to prove
\begin{thm}\label{Billiards_CLT}
If $f\in \cH^-_\star\cap\cH^+_\star$ is real-valued and bounded, then the Central Limit Theorem \eqref{eq:CLT} holds together with \eqref{eq:CLT_var}.
\end{thm}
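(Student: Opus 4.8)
The plan is to verify the two hypotheses of Corollary~\ref{cor:corr_sum_CLT} for $f$. The first, \eqref{eq:corr_sum}, is immediate: as $f\in\cH^+_\star\cap\cH^-_\star$ is bounded, Theorem~\ref{thm:chernov_pair_correlation} applied to the pair $(f,f)$ gives $|C_f(n)|=|\average{f\cdot f\circ\cF^n}-\average{f}^2|\le B_{f,f}\,\theta_{f,f}^{\,n}$ with $\theta_{f,f}<1$, so the autocorrelations decay exponentially and $\sum_n n|C_f(n)|<\infty$; in particular $\sigma_f^2$ in \eqref{eq:CLT_var} is well defined, and we may assume $\sigma_f^2>0$ (otherwise the normalization in \eqref{eq:CLT} is degenerate and there is nothing to prove).

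For the second hypothesis, fix $t\in\bR$. First I would record that $g=e^{itf/\sqrt{k\Var S_p}}$ inherits the dynamical H\"older regularity of $f$: from $|e^{i\alpha}-e^{i\beta}|\le|\alpha-\beta|$ one gets $g\in\cH^+_\star\cap\cH^-_\star$ with $\vartheta_g=\vartheta_f$, $\norm{g}_\infty=1$, and $K_g=|t|\,K_f/\sqrt{k\Var S_p}$. The decisive feature is $K_g\to0$ as $n\to\infty$: since $p=[n^a]\to\infty$ we have $\Var S_p\sim p\,\sigma_f^2$, so with $k\sim n^{1-a}$ and $p\sim n^a$ one gets $k\Var S_p\sim n\,\sigma_f^2\to\infty$. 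Next I recast the correlations in Corollary~\ref{cor:corr_sum_CLT} into the shape required by Theorem~\ref{thm:chernov_pair_correlation}. Since $\cH^-_\star$ is closed under composition with $\cF$ (Lemma~\ref{lem:stability}) and $g\in\cH^-_\star$, Corollary~\ref{cor:dyn_holder_invariant} applies to
\beqn
W_r=w_1\cdots w_{r-1}=\prod_{j=1}^{r-1}\prod_{m=0}^{p-1}g\circ\cF^{\,m+(p+q)(j-1)}
\eeqn
and shows $W_r\in\cH^-_\star$ with $\vartheta_{W_r}=\vartheta_f$, $\norm{W_r}_\infty=1$, and --- crucially, \emph{independently of $r$} --- $K_{W_r}=K_g/(1-\vartheta_f)$, because every factor has sup-norm $1$; this is the phenomenon emphasized in Remark~\ref{rem:multicorr_bound}. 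Although $w_1=\prod_{m=0}^{p-1}g\circ\cF^m$ need not lie in $\cH^+_\star$, its shift $u\defas w_1\circ\cF^{-(p-1)}=\prod_{\ell=0}^{p-1}g\circ\cF^{-\ell}$ does, by Lemma~\ref{lem:stability} and Corollary~\ref{cor:dyn_holder_invariant}, with $\vartheta_u=\vartheta_f$, $\norm{u}_\infty=1$, $K_u=K_g/(1-\vartheta_f)$. By $\mu$-invariance, $\average{u}=\average{w_1}$ and $\average{w_1\cdot W_r\circ\cF^{p+q}}=\average{u\cdot W_r\circ\cF^{\,q+1}}$.

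Then I would apply Theorem~\ref{thm:chernov_pair_correlation} to the bounded pair $u\in\cH^+_\star$, $W_r\in\cH^-_\star$ with time gap $q+1$ (equivalently, invoke Corollary~\ref{cor:multicorr_bound}). As $\vartheta_u=\vartheta_{W_r}=\vartheta_f$, the rate is $\theta_{u,W_r}=\bigl[\max\{\vartheta_\Upsilon,\vartheta_f,e^{-1/\kappa}\}\bigr]^{1/4}\defasr\theta<1$, a constant independent of $n$, $r$, $t$, and
\beqn
\Bigl|\average{w_1\cdot W_r\circ\cF^{p+q}}-\average{w_1}\average{W_r}\Bigr|\le B_{u,W_r}\,\theta^{\,q+1},\qquad B_{u,W_r}=C_0\Bigl(\tfrac{2K_g}{1-\vartheta_f}+1\Bigr).
\eeqn
Since $K_g\to0$, we have $B_{u,W_r}\le 2C_0$ for all large $n$, uniformly in $2\le r\le k$, hence
\beqn
\sum_{r=2}^{k}\Bigl|\average{w_1\cdot W_r\circ\cF^{p+q}}-\average{w_1}\average{W_r}\Bigr|\le 2C_0\,k\,\theta^{\,q+1}.
\eeqn
Here $k=[n/(p+q)]\sim n^{1-a}$ grows only polynomially while $q=[n^b]$ with $b>0$ makes $\theta^{\,q}$ decay faster than any power of $n$; so the right-hand side tends to $0$, for every $t\in\bR$. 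This is exactly the second hypothesis of Corollary~\ref{cor:corr_sum_CLT}, which together with \eqref{eq:corr_sum} gives the CLT \eqref{eq:CLT} with \eqref{eq:CLT_var}.

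The step I expect to be the real obstacle is the uniformity in $r$ of the prefactor $B_{u,W_r}$: $W_r$ is a product of $\sim rp$ dynamical factors, and the argument collapses unless the dynamical H\"older constant of such a product stays bounded as the number of factors grows. That it does is the content of Corollary~\ref{cor:dyn_holder_invariant} and Remark~\ref{rem:multicorr_bound} --- growth is tied to the sup-norms, all equal to $1$ here --- and it relies on working with the relaxed classes $\cH^\pm_\star$, for which regularity survives iteration (Lemma~\ref{lem:stability}), rather than with $\cH^\pm$. A second delicate point is the balance between the $\sim k\sim n^{1-a}$ summands and the stretched-exponential factor $\theta^{\,q}=\theta^{[n^b]}$: this works only because the degraded rate $\theta$ in \eqref{eq:theta_def} is bounded away from $1$ uniformly in $n$, $r$, $t$ --- it is the \emph{form} of the pair correlation bound, not merely its exponential type, that matters --- while the normalization by $\sqrt{k\Var S_p}$, which forces $K_g\to0$, keeps the prefactor $B_{u,W_r}$ bounded uniformly in $r$.
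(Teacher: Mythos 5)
Your proposal is correct and takes essentially the same route as the paper's proof: both verify the hypotheses of Corollary~\ref{cor:corr_sum_CLT} by noting $K_g=\cO(1/\sqrt{n})$, shifting $w_1$ backwards into $\cH^+_\star$, and applying the multiple-correlation bound of Corollary~\ref{cor:multicorr_bound} (equivalently Theorem~\ref{thm:chernov_pair_correlation}) to get a prefactor bounded uniformly in $r$, so that $k\theta^q\to0$. Your explicit verification that $K_{W_r}$ is independent of $r$ and your remark on the degenerate case $\sigma_f^2=0$ are welcome but do not change the argument.
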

\begin{proof}
Because the auto correlations $C_f(n)$ decay sufficiently fast by Theorem~\ref{thm:chernov_pair_correlation}, condition \eqref{eq:corr_sum} is satisfied which implies \eqref{eq:CLT_var}. According to Corollary~\ref{cor:corr_sum_CLT}, it suffices to bound $\left | \average{w_1 \cdot W_r\circ \cF^{p+q}}-\average{w_1}\average{W_r} \right |$ (see the definitions of $w_1$ and $W_r$ in Theorem~\ref{thm:multicorr_CLT} and Corollary~\ref{cor:corr_sum_CLT}). This involves functions of the form $g=e^{itf/\sqrt{k\Var S_p}}\in  \cH^-_\star\cap\cH^+_\star$,  for which ${\norm{g}}_\infty=1$, $\vartheta_g=\vartheta_f$, and $K_g\leq (|t|/\sqrt{k\Var S_p}) K_f =\cO(1/\sqrt{n}) |t| K_f$. We then use Corollary~\ref{cor:multicorr_bound} with $w_1\circ\cF^{-p}$ assuming the role of $\tilde f$ and $W_r$ that of $\tilde g$ (recall that $\cF$ is invertible):
\beqn
\begin{split}
 \left | \average{w_1 \cdot W_r\circ \cF^{p+q}}-\average{w_1}\average{W_r} \right |
& \leq C_0\left(\frac{\cO(1/\sqrt{n}) |t| K_f}{1-\vartheta_f}+1\right) \theta_{f,f}^q.
\end{split}
\eeqn
Finally, recall from above Theorem~\ref{thm:multicorr_CLT} that the quantity $k$ appearing in Corollary~\ref{cor:corr_sum_CLT} grows as $n^{1-a}$. Hence, $\lim_{n\to 0} k\theta_{f,f}^q = 0$, and the CLT follows by Corollary~\ref{cor:corr_sum_CLT}.
\end{proof}

The class $\cH^-_\star\cap\cH^+_\star$ of observables $f$ is wider than the class $\cH^-\cap\cH^+$ of \cite{Chernov-BilliardsCoupling}. Notice in particular that we have derived the CLT directly from a pure pair correlation bound, Theorem~\ref{thm:chernov_pair_correlation}.

In  \cite{Chernov-BilliardsCoupling}, the proof of Theorem~4.5 (which corresponds to Corollary~\ref{cor:multicorr_bound} above) is almost identical with the proof of Theorem~4.3 (which corresponds to Theorem~\ref{thm:chernov_pair_correlation} above). We have seen above that Theorem~4.5 actually becomes a direct consequence of Theorem~4.3 after the classes $\cH^+_\star$ and $\cH^-_\star$ have been incorporated as in Theorem~\ref{thm:chernov_pair_correlation} and Corollary~\ref{cor:multicorr_bound}. 

\begin{remark} 
In \cite{Chernov-BilliardsCoupling}, Theorem~4.3 is derived from an ``equidistribution property'' stated in Theorem~4.1. Similarly, Theorem~4.5 is derived from Theorem~4.2 which states an ``equidistribution property'' for multiple observation times. Both Theorem~4.1 and 4.2 require the observables to be in $\cH^-\cap\cH^+$, but as is pointed out in the paper (and clear from the proof), they hold under the weaker assumption that the observables be in $\cH^-$. Interestingly, Theorem~4.1 can be strengthened so as to hold for $\cH^-_\star$. After this it implies directly a stronger form of Theorem~4.2 that holds for $\cH^-_\star$.
\end{remark}

Next, we give the argument leading to Theorem~\ref{thm:chernov_pair_correlation}. Instead of repeating large parts of \cite{Chernov-BilliardsCoupling} in which all the basic work has been done, we point directly to the places in it where care is needed.
\begin{proof}[Proof of Theorem \ref{thm:chernov_pair_correlation}]
The Coupling Lemma \cite[Lemma~3.4]{Chernov-BilliardsCoupling} does not concern observables at all and is thus immune to relaxing their regularity. The Equidistribution Theorem \cite[Theorem~4.1]{Chernov-BilliardsCoupling} holds assuming just $g\in\cH^-_\star$ \footnote{In the original text, the observable is denoted $f$. We have renamed it $g$ not to create confusion in the rest of this proof.}. This is so, because regularity is only used in the estimate (4.4) of the proof, and this estimate remains valid if $g\in\cH^-_\star$ as the coupled points $x$ and $y$ lie on the same \emph{homogeneous stable manifold}\footnote{In \cite{Chernov-BilliardsCoupling}, homogeneous (un)stable manifolds are often called (un)stable H-manifolds.} according to the Coupling Lemma. 

One then has to verify that relaxing the regularity condition for the pair correlation bound of \cite[Theorem~4.3]{Chernov-BilliardsCoupling} is legitimate. The bound on the quantity $\delta_\alpha$ appearing below (4.11) in \cite{Chernov-BilliardsCoupling} is clearly the only place where the regularity of $g$ matters. This bound remains unaffected if we assume $g\in\cH^-_\star$, since it only relies on the bound in Theorem~4.1 which remains unchanged as we saw above. As far as the regularity of $f$ is concerned, only homogeneous unstable manifolds count. This is obvious from the definition of $\bar f$ and the inequality (4.10). Thus, it is enough to take $f\in\cH^+_\star$.
\end{proof}


\section{Anosov diffeomorphisms}
We will exemplify with Anosov diffeomorphisms the passage from a strong pair correlation bound to the CLT. As Anosov diffeomorphisms lack the singularities of Billiards, one rightly expects everything to work as in the previous section.  Notice, however, that throughout Section~\ref{sec:billiards} it was assumed that the space is 2-dimensional. We include this section because of its transparency and because detailed pair correlation bounds are available in any dimension. 

In the following, $C^{1+\alpha}$ stands for differentiable functions whose first derivative is $C^\alpha$, \ie, H\"older continuous with exponent $0<\alpha<1$. Let $\cM$ be a $d$-dimensional Riemannian manifold and $\cF$ a transitive $C^{1+\alpha}$ Anosov diffeomorphism on it. Let $d^s(x,y)$ denote the distance between $x$ and $y$ along a stable manifold ($=\infty$ if $x$ and $y$ are not on the same stable manifold, \ie, $y\notin W^s(x)$). Similarly, let $d^u(x,y)$ be the distance  along an unstable manifold. There exists $0<\nu<1$ such that $d^s(\cF x,\cF y)\leq \nu d^s(x,y)$ if $x\in W^s(y)$ and $d^u(\cF^{-1} x,\cF^{-1} y)\leq \nu d^u(x,y)$ if $x\in W^u(y)$.

We recall some definitions from \cite{BressaudLiverani}. Fix $\delta>0$ and $0<\beta<1$. Define, for all $f:\cM\to\bC$,
\beqn
{|f|}_s = \sup_{d^s(x,y)\leq \delta}\frac{|f(x)-f(y)|}{d^s(x,y)^\beta}, \qquad {\norm{f}}_s = {\norm{f}}_\infty + {|f|}_s,
\eeqn
and
\beqn
{|f|}_u = \sup_{d^u(x,y)\leq \delta}\frac{|f(x)-f(y)|}{d^u(x,y)^\alpha}, \qquad {\norm{f}}_u = {\norm{f}}_1 + {|f|}_u,
\eeqn
where the $L^1$-norm is defined with respect to the Riemannian volume. Finally, $\cC_s$ stands for the set of Borel measurable functions $f:\cM\to\bC$ with ${\norm{f}}_s<\infty$.
 
Although results similar to the one below have been known much earlier, we cite \cite[Corollary 2.1]{BressaudLiverani} due to the precise form of the bound there.
\begin{thm}\label{thm:Anosov_pair_correlation}
There exists a unique $\cF$-invariant SRB measure $\mu$. There exist $0<\theta<1$ and $C_0>0$ such that, for all $f\in C^\alpha$ and all $g\in \cC_s$,
\beqn
| \average{f \cdot  g\circ \cF^{n}} - \average{f}\average{g} | \leq C_0{\norm{f}}_u{\norm{g}}_s \theta^n,
\eeqn
where $\average{\,\cdot\,}$ stands for the $\mu$-integral.
\end{thm}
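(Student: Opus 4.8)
The plan is to establish exponential mixing by a coupling argument for ``standard families'' of unstable leaves --- the singularity-free analogue of the Coupling Lemma used in Section~\ref{sec:billiards} --- from which the existence, uniqueness, and exponential mixing of the SRB measure $\mu$ all follow at once. (Equivalently one could run a transfer-operator argument: let $\cL$ act on the anisotropic space $(\cC_s,{\norm{\,\cdot\,}}_s)$, prove a Lasota--Yorke estimate against the weak norm ${\norm{\,\cdot\,}}_u$ and a compact embedding via Arzel\`a--Ascoli along leaves, identify the leading eigendata with $\mu$, and extract a spectral gap from transitivity and positivity; we describe the geometric version, which is closer in spirit to the rest of the paper.)

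First I would fix the admissible class of measures. Call $(W,\rho)$ a \emph{standard pair} if $W$ is a local unstable manifold of length comparable to $\delta$ and $\rho$ is a probability density on $W$ whose logarithm is $\alpha$-H\"older with a fixed constant $H$; a \emph{standard family} is a probability average of standard pairs. Using the $C^{1+\alpha}$ hypothesis together with $d^u(\cF^{-1}x,\cF^{-1}y)\leq\nu\,d^u(x,y)$, I would prove the classical \emph{bounded-distortion estimate}: for every $n\geq 0$ the Jacobian of $\cF^n$ along an unstable leaf has $\alpha$-H\"older logarithm with a constant independent of $n$. Combined with a \emph{growth lemma} --- after applying $\cF$ and cutting the image leaves down to length in $(\delta/2,\delta)$, only an exponentially small fraction of the mass can sit on the short pieces created near the cuts --- this shows that $\cF_\ast$ carries standard families to standard families and that the cone of densities with log-H\"older constant $\leq H$ is eventually forward invariant. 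The SRB measure is then the weak-$\ast$ limit $\mu=\lim_n(\cF^n)_\ast\eta$ for any standard family $\eta$ (existence by compactness and cone invariance), and $\mu$ itself disintegrates along unstable leaves into densities obeying the same log-H\"older bound.

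Next I would prove the \emph{coupling lemma}: using transitivity, for any two standard families $\eta_1,\eta_2$ one can write, for each $n$,
\beqn
(\cF^n)_\ast\eta_i=\xi_n+\zeta^{(i)}_n,\qquad i=1,2,
\eeqn
where the common part $\xi_n$ is supported on matched pairs $(x,y)$ lying on a single stable manifold at distance $d^s(x,y)\leq\nu^{\,m}\delta$, with $m$ the number of iterates elapsed since the pair was matched, while $|\zeta^{(i)}_n|\leq C_0\theta^n$. The mechanism is standard: the local product structure lets a uniformly positive fraction of the mass carried by two long unstable leaves be identified along stable holonomies at each coupling step, the growth lemma supplies a uniform density of long leaves, and contraction of $d^s$ under $\cF$ makes matched points converge geometrically; uniqueness and mixing of $\mu$ are immediate corollaries. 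To deduce the theorem, write $\average{f\cdot g\circ\cF^n}-\average{f}\average{g}=\int g\,d\bigl[(\cF^n)_\ast(f\mu)-\average{f}\mu\bigr]$, where $f\mu$ decomposes into finitely many signed standard families whose total mass and leafwise $\alpha$-H\"older norms are controlled by ${\norm{f}}_u$ (here $f\in C^\alpha$ restricts to an $\alpha$-H\"older function on every unstable leaf, and the ${\norm{f}}_1$-component of ${\norm{f}}_u$ controls total mass via absolute continuity of the conditionals of $\mu$). Applying the coupling lemma to $(\cF^n)_\ast(f\mu)$ and $\average{f}\mu=(\cF^n)_\ast(\average{f}\mu)$: on matched pairs $|g(x)-g(y)|\leq{|g|}_s\,d^s(x,y)^\beta\leq{|g|}_s(\nu^{\,m}\delta)^\beta$, so a geometric summation over the matching time bounds their contribution by $C\,{|g|}_s\,\theta^n$ (absorbing a polynomial factor into a slightly larger rate), while the residuals contribute at most ${\norm{g}}_\infty\cdot C_0{\norm{f}}_u\theta^n$; collecting terms and relabelling constants yields $|\average{f\cdot g\circ\cF^n}-\average{f}\average{g}|\leq C_0{\norm{f}}_u{\norm{g}}_s\theta^n$ with $\theta<1$ expressible in terms of $\nu$, $\beta$ and the coupling rate.

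The main obstacle is the coupling lemma, and within it the growth/recovery estimate ensuring that a uniformly positive fraction of mass stays on long unstable leaves after the chopping, together with the verification that the stable-holonomy map used for matching is absolutely continuous with uniformly bounded Jacobian, so that $\xi_n$ is genuinely common to both families. For a $C^{1+\alpha}$ Anosov diffeomorphism there are no singularities, so the fragmentation pathologies of billiards do not arise and all the constants above are uniform; this is precisely the sense in which, as remarked before the statement, ``everything works as in the previous section.''
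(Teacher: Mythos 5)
The paper does not prove this theorem at all: it is imported verbatim as \cite[Corollary~2.1]{BressaudLiverani}, with the remark that that reference is cited ``due to the precise form of the bound there.'' So there is no in-paper argument to compare against; what you have written is a reconstruction of the proof in the cited source, and structurally it is the right one --- Bressaud--Liverani do indeed construct the SRB measure and prove the correlation bound by a coupling argument on families of densities supported on unstable leaves, with the matched mass estimated through the stable H\"older seminorm $|g|_s$ and the unmatched residual through $\norm{g}_\infty$, exactly as in your final paragraph. Your outline correctly isolates the load-bearing lemmas (bounded distortion, the growth/recovery estimate, uniform positivity of the coupled fraction, absolute continuity of the stable holonomy) and the final bookkeeping $\sum_m(\text{mass matched at time }n-m)(\nu^m\delta)^\beta\leq C\tilde\theta^n$ is sound.

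Two caveats. First, as written this is a roadmap, not a proof: each of the named lemmas is a genuine piece of work and none is carried out; in the blind setting that is acceptable for a theorem of this depth, but you should be explicit that you are deferring to (or reproving) the coupling machinery rather than deriving it. Second, the one point that deserves more care is precisely the feature the paper cites the reference for: the right-hand side involves $\norm{f}_u=\norm{f}_1+|f|_u$ and \emph{not} $\norm{f}_\infty$. Your decomposition of $f\mu$ into signed standard families must therefore have total variation and leafwise regularity controlled by $\norm{f}_1+|f|_u$ alone. This works --- on a leaf $W$ of length $\delta$ one has $\sup_W|f|\leq\frac{1}{|W|}\int_W|f|+|f|_u\delta^\alpha$, and integrating over the disintegration of $\mu$ converts the first term into $\norm{f}_1$ using the uniform bounds on the conditional densities --- but your parenthetical ``controls total mass via absolute continuity of the conditionals'' is the only place this is acknowledged, and it is the step where a naive decomposition (e.g.\ adding a multiple of $\norm{f}_\infty$ to make $f$ positive) would produce the wrong norm on the right-hand side.
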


The following bound on multiple correlations is readily implied. Extending it to nonconsecutive times is easy.
\begin{cor}\label{cor:Anosov_multicorr_bound}
Let $\tilde f = f_0 \cdot f_1\circ\cF^{-1} \cdots f_r\circ \cF^{-r}$ and $\tilde g = g_0\cdot g_1\circ \cF^{1} \cdots g_k\circ \cF^{k}$, where $f_0,\dots, f_r\in C^\alpha$ and $g_0,\dots,g_k\in \cC_s$. Assume also that each ${\norm{f_i}}_\infty={\norm{f_0}}_\infty$ and each ${\norm{g_i}}_\infty={\norm{g_0}}_\infty$. Then,
\beqn
\left | \average{\tilde f \cdot  \tilde g\circ \cF^{n}} - \average{\tilde f}\average{\tilde g} \right | \leq B_{\tilde f,\tilde g}\theta^n,
\eeqn
where
\beqn
B_{\tilde f, \tilde g} = C {\norm{f_0}}_\infty^r {\norm{g_0}}_\infty^k \left(\max_i{| f_i |}_u{\norm{g_0}}_\infty+{\norm{f_0}}_\infty \max_i{| g_i |}_s+{\norm{f_0}}_\infty{\norm{g_0}}_\infty\right).
\eeqn
\end{cor}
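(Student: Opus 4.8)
The plan is to imitate the proof of Corollary~\ref{cor:multicorr_bound}, using Theorem~\ref{thm:Anosov_pair_correlation} in place of Theorem~\ref{thm:chernov_pair_correlation}. The central observation is that $\tilde f$ and $\tilde g$ are again products of regular functions composed with iterates of $\cF$ arranged so that their regularity survives: $\tilde f$ involves only backward iterates, which contract unstable distances, and $\tilde g$ involves only forward iterates, which contract stable distances. So I need the analogues of Lemma~\ref{lem:stability} and the product rule~\eqref{eq:prod_Holder} for the $C^\alpha$ and $\cC_s$ classes, then plug into Theorem~\ref{thm:Anosov_pair_correlation}.

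First I would record the stability facts. Since $d^u(\cF^{-1}x,\cF^{-1}y)\leq\nu d^u(x,y)$ for $x\in W^u(y)$, one gets $|f\circ\cF^{-1}|_u\leq\nu^\alpha|f|_u$ and $\norm{f\circ\cF^{-1}}_\infty=\norm{f}_\infty$, while $\norm{f\circ\cF^{-1}}_1$ changes only by the (uniformly bounded) Jacobian of $\cF$; absorbing that into the constant, $\cF^{-1}$ essentially preserves $\cC_u$-type bounds and improves the Hölder seminorm. Dually, $d^s(\cF x,\cF y)\leq\nu d^s(x,y)$ gives $|f\circ\cF|_s\leq\nu^\beta|f|_s$ and $\norm{f\circ\cF}_\infty=\norm{f}_\infty$, so $\cF$ preserves $\cC_s$ and contracts its seminorm. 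Second, for a product of bounded functions, $|f_1f_2|_u\leq\norm{f_1}_\infty|f_2|_u+|f_1|_u\norm{f_2}_\infty$ (and likewise for $|\cdot|_s$), by the usual add-and-subtract trick. Iterating these two facts: with all $\norm{f_i}_\infty=\norm{f_0}_\infty$, one finds $\norm{\tilde f}_\infty\leq\norm{f_0}_\infty^{r+1}$ and a Hölder seminorm for $\tilde f$ bounded by $\sum_{j=0}^r\norm{f_0}_\infty^{r}\,\nu^{j\alpha}|f_j|_u\leq\norm{f_0}_\infty^{r}\,(1-\nu^\alpha)^{-1}\max_i|f_i|_u$, with the geometric factor swallowed by $C$; the stronger bound in the statement just uses $\max$ over the $|f_i|_u$. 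The symmetric computation bounds $\norm{\tilde g}_\infty\leq\norm{g_0}_\infty^{k+1}$ and the $\cC_s$-seminorm of $\tilde g$ by $\norm{g_0}_\infty^{k}\max_i|g_i|_s$ up to the constant. Feeding $\norm{\tilde f}_u=\norm{\tilde f}_1+|\tilde f|_u\leq C(\norm{f_0}_\infty^{r+1}+\norm{f_0}_\infty^r\max_i|f_i|_u)$ and $\norm{\tilde g}_s\leq C(\norm{g_0}_\infty^{k+1}+\norm{g_0}_\infty^k\max_i|g_i|_s)$ into Theorem~\ref{thm:Anosov_pair_correlation} and expanding the product $\norm{\tilde f}_u\norm{\tilde g}_s$ yields exactly $B_{\tilde f,\tilde g}\theta^n$ with the stated $B_{\tilde f,\tilde g}$, after collecting terms and renaming constants. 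The key point, as in Remark~\ref{rem:multicorr_bound}, is that the rate $\theta$ is untouched and all growth in $r,k$ sits on the $\norm{\cdot}_\infty$ factors.

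The main obstacle, such as it is, is bookkeeping with the $L^1$-norm in $\norm{\cdot}_u$: unlike $\norm{\cdot}_\infty$, the $L^1$-norm is not exactly invariant under $\cF^{-1}$ (the change of variables introduces the Jacobian) and it does not satisfy a clean submultiplicativity under products. I would handle this by bounding $\norm{\tilde f}_1\leq\norm{\tilde f}_\infty\leq\norm{f_0}_\infty^{r+1}$ (the manifold has finite volume, so $\norm{\cdot}_1\leq(\mathrm{vol}\,\cM)\norm{\cdot}_\infty$ up to normalization), sidestepping any delicate $L^1$ estimate entirely; this is harmless because the $\norm{\cdot}_\infty^{r+1}$ term is already present in $B_{\tilde f,\tilde g}$. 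The only other thing to note is that one must check $\tilde f\in C^\alpha$ and $\tilde g\in\cC_s$ so that Theorem~\ref{thm:Anosov_pair_correlation} applies — but this is immediate from the stability and product estimates above, which show the relevant seminorms are finite. Extension to nonconsecutive times, as remarked in the statement, only inserts extra powers of $\nu^\alpha$ or $\nu^\beta$ into the geometric sums and changes nothing essential.
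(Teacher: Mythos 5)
Your proposal is correct and follows essentially the same route as the paper: telescoping the product, using the contraction $d^s(\cF x,\cF y)\leq\nu d^s(x,y)$ (resp. $d^u(\cF^{-1}x,\cF^{-1}y)\leq\nu d^u(x,y)$) to sum a geometric series bounding $|\tilde g|_s$ and $|\tilde f|_u$, handling the $L^1$ part via $\norm{\tilde f}_1\leq\norm{\tilde f}_\infty\norm{1}_1$, and then applying Theorem~\ref{thm:Anosov_pair_correlation}. The paper's proof is the same computation written out explicitly for $\tilde g$ and invoked ``mutatis mutandis'' for $\tilde f$.
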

\begin{proof}
Assuming $d^s(x,y)\leq \delta$, 
\beqn
\begin{split}
|\tilde g(x)-\tilde g(y)| & \leq \sum_{l=0}^k \left| g_0(x) \cdots g_{l-1}(\cF^{l-1}x) \right| \left| g_l(\cF^{l} x)-g_l(\cF^{l} y) \right|  \left| g_{l+1}(\cF^{l+1}y) \cdots g_k( \cF^{k}y) \right| \\
& \leq \sum_{l=0}^k\left(\prod_{i\neq l} \norm{g_i}_\infty \right) \! \left| g_l(\cF^{l} x)-g_l(\cF^{l} y) \right| \leq  \sum_{l=0}^k \left(\prod_{i\neq l} \norm{g_i}_\infty \right) \! {| g_l |}_s d^s(\cF^l x,\cF^l y)^\beta \\ 
& \leq \sum_{l=0}^k \left(\prod_{i\neq l} \norm{g_i}_\infty \right) \! {| g_l |}_s\nu^{\beta l} d^s(x,y)^\beta 
\leq \frac{\prod_{i} \norm{g_i}_\infty } {\min_i \norm{g_i}_\infty } \frac{\max_l{| g_l |}_s}{1-\nu^\beta}d^s(x,y)^\beta.
\end{split}
\eeqn
From this we obtain a bound on ${|f|}_s$ which implies
\beqn
{\norm{\tilde g}}_s \leq \frac{1}{1-\nu^\beta} \left(\prod_{i} \norm{g_i}_\infty\right) \left(1+\frac{\max_l{| g_l |}_s}{\min_i \norm{g_i}_\infty}\right)
\eeqn
and that $\tilde g\in\cC_s$.
It is clear that $\tilde f\in C^\alpha$. Mimicking the treatment of $\tilde g$ and using ${\norm{\tilde f}}_1 \leq {\norm{\tilde f}}_\infty {\norm{1}}_1$, we also get
\beqn
{\norm{\tilde f}}_u \leq \frac{\max(1,{\norm{1}}_1)}{1-\nu^\alpha} \left(\prod_{i} \norm{f_i}_\infty\right) \left(1+\frac{\max_l{| f_l |}_u}{\min_i \norm{f_i}_\infty}\right).
\eeqn
We can then apply Theorem~\ref{thm:Anosov_pair_correlation}.
\end{proof}
Notice that the proof contains a stability result similar to Lemma~\ref{lem:stability}. Following \cite{ChernovMarkarian}, the Central Limit Theorem can then be established immediately with the aid of Corollary~\ref{cor:Anosov_multicorr_bound}.
\begin{thm}
If $f\in C^\alpha\cap\cC_s$ is real-valued, then the Central Limit Theorem \eqref{eq:CLT} holds together with \eqref{eq:CLT_var}.
\end{thm}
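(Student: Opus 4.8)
The plan is to run the argument of Theorem~\ref{Billiards_CLT} verbatim, with the billiard ingredients replaced by their Anosov counterparts: Theorem~\ref{thm:Anosov_pair_correlation} in place of Theorem~\ref{thm:chernov_pair_correlation}, and Corollary~\ref{cor:Anosov_multicorr_bound} in place of Corollary~\ref{cor:multicorr_bound}. First I would settle the variance. Taking $f=g$ in Theorem~\ref{thm:Anosov_pair_correlation} gives $|C_f(n)|\le C_0\norm{f}_u\norm{f}_s\,\theta^n$, so $\sum_{n}n|C_f(n)|<\infty$, i.e.\ \eqref{eq:corr_sum} holds, and hence \eqref{eq:CLT_var}. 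If $\sigma_f=0$ the CLT is the trivial degenerate statement, so I may assume $\sigma_f>0$; then, since $k\sim n^{1-a}$ and $\Var S_p/p\to\sigma_f^2$ with $p=[n^a]$, one has $k\Var S_p\sim n\sigma_f^2$, which is exactly the growth the normalization below requires. By Corollary~\ref{cor:corr_sum_CLT} it now suffices, for each fixed $t\in\bR$, to show
\beqn
\lim_{n\to\infty}\sum_{r=2}^{k}\bigl|\average{w_1\cdot W_r\circ\cF^{p+q}}-\average{w_1}\average{W_r}\bigr|=0,
\eeqn
where $w_1,\dots,w_k$ and $W_r=w_1\cdots w_{r-1}$ are as in Theorem~\ref{thm:multicorr_CLT} and Corollary~\ref{cor:corr_sum_CLT}, all built from $g=e^{itf/\sqrt{k\Var S_p}}$.

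Next I would check that $g\in C^\alpha\cap\cC_s$ with the right quantitative control. Since $f$ is real-valued and bounded, $\norm{g}_\infty=1$, and from $|e^{ia}-e^{ib}|\le|a-b|$ one gets ${|g|}_u\le(|t|/\sqrt{k\Var S_p}){|f|}_u$ and ${|g|}_s\le(|t|/\sqrt{k\Var S_p}){|f|}_s$, both $\cO(n^{-1/2})|t|$. To bring Corollary~\ref{cor:Anosov_multicorr_bound} to bear, I rewrite $w_1=g\cdot g\circ\cF\cdots g\circ\cF^{p-1}=\bigl(g\cdot g\circ\cF^{-1}\cdots g\circ\cF^{-(p-1)}\bigr)\circ\cF^{p-1}$, so by $\cF$-invariance $\average{w_1\cdot W_r\circ\cF^{p+q}}=\average{\tilde f\cdot W_r\circ\cF^{q+1}}$ with $\tilde f=g\cdot g\circ\cF^{-1}\cdots g\circ\cF^{-(p-1)}$, and correspondingly $\average{w_1}=\average{\tilde f}$. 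The factor $W_r$ is already a product $g_0\cdot g_1\circ\cF^1\cdots g_{k'}\circ\cF^{k'}$ over nonnegative times, with $k'=(r-1)p$ and gaps of length $q$ between consecutive blocks, which the nonconsecutive-times version of Corollary~\ref{cor:Anosov_multicorr_bound} accommodates. All the $f_i$ and $g_i$ here equal $g$, so the hypothesis $\norm{f_i}_\infty=\norm{f_0}_\infty$, $\norm{g_i}_\infty=\norm{g_0}_\infty$ holds automatically, and $\norm{g}_\infty^{\,p-1}\norm{g}_\infty^{\,k'}=1$. Thus Corollary~\ref{cor:Anosov_multicorr_bound} yields
\beqn
\bigl|\average{w_1\cdot W_r\circ\cF^{p+q}}-\average{w_1}\average{W_r}\bigr|\le C\Bigl(\cO(n^{-1/2})|t|\,\bigl({|f|}_u+{|f|}_s\bigr)+1\Bigr)\theta^{q+1},
\eeqn
which is $\cO(1)\,\theta^{q}$ uniformly in $2\le r\le k$ (for fixed $t$).

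Summing over $r$ then gives $\sum_{r=2}^{k}|\cdots|\le C'\,k\,\theta^{q}$, and since $k\sim n^{1-a}$ is polynomial in $n$ while $q=[n^b]$, we have $k\theta^{q}\to0$; Corollary~\ref{cor:corr_sum_CLT} delivers \eqref{eq:CLT} together with \eqref{eq:CLT_var}. The one point deserving attention is the bookkeeping behind the displayed bound: one must verify that the prefactor $B_{\tilde f,W_r}$ does not grow as the number of factors $p-1$ and $k'$ (hence $n$) increases. It does not, precisely because, as recorded in Remark~\ref{rem:multicorr_bound} and visible from the statement of Corollary~\ref{cor:Anosov_multicorr_bound}, the entire dependence on the number of factors is carried by the powers $\norm{f_0}_\infty^{\,r}\norm{g_0}_\infty^{\,k}$, which here are identically $1$, while the regularity seminorms ${|f_i|}_u,{|g_i|}_s$ only enter linearly and are themselves $\cO(n^{-1/2})$. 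Everything else is the routine of Theorem~\ref{Billiards_CLT}; the absence of singularities in the Anosov setting makes even the stability step automatic, since it is already contained in the proof of Corollary~\ref{cor:Anosov_multicorr_bound}.
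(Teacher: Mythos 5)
Your argument is exactly the paper's: the paper proves this theorem by declaring that the proof of Theorem~\ref{Billiards_CLT} applies \emph{mutatis mutandis}, and your proposal simply carries out those mutations — Theorem~\ref{thm:Anosov_pair_correlation} for the variance condition \eqref{eq:corr_sum}, Corollary~\ref{cor:Anosov_multicorr_bound} (nonconsecutive-times version) applied to $\tilde f=w_1\circ\cF^{-(p-1)}$ and $W_r$, the observation that $\norm{g}_\infty=1$ kills the only growing factors in $B_{\tilde f,W_r}$ while ${|g|}_u,{|g|}_s=\cO(n^{-1/2})$, and finally $k\theta^{q}\to0$ via Corollary~\ref{cor:corr_sum_CLT}. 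The details you supply are correct, so this is a correct proof along the same route.
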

\begin{proof}
The proof of Theorem~\ref{Billiards_CLT} applies, mutatis mutandis.
\end{proof}

Corollary~\ref{cor:Anosov_multicorr_bound} is interesting in its own right, as it gives a rather explicit bound on the multiple correlations for generic transitive Anosov diffeomorphisms.


\section{A non-invertible example}\label{sec:pw_exp}
Here we present an example of a non-invertible, piecewise expanding, dynamical system in which a known pair correlation bound (Theorem~\ref{thm:pw_corr}) can be used to prove the CLT. However, this time the situation is not as straightforward, and an extra ingredient, the Lasota--Yorke inequality appearing in \eqref{eq:pw_LY_Leb}, will be needed. The latter is known to imply a spectral gap and further the CLT. Nonetheless, our proof is formally independent of spectral arguments and we have chosen to include it as an interesting example of the usage of Corollary \ref{cor:corr_sum_CLT}.  In the case at issue, a notably direct way of passing from the pair correlation bound to the CLT has been established in \cite{Liverani-CLT}. It is based on a general result on martingale approximations rather than bounding multiple correlations, which seems particularly well suited to the non-invertible setting.

Let $\cF:[0,1]\to[0,1]$ be a piecewise $C^2$, uniformly expanding, map and suppose $d\mu=\phi dx$ is an absolutely continuous invariant measure with respect to the Lebesgue measure $dx$ with density $\phi\in L^1([0,1],dx)$. We write ${\norm{f}}_1 = \int_0^1|f|\,dx$. A complex-valued function $g$ defined on $[a,b]$ is of bounded variation, denoted $g\in BV[a,b]$, if the total variation $\bigvee_a^bg = \sup_{(x_i)}\sum_i |g(x_i)-g(x_{i+1})| $ is finite. Here the supremum runs over all finite partitions of $[a,b]$. The following theorem \cite{HofbauerKeller} can be found in \cite{Liverani-pwexp} up to trivial modifications.
\begin{thm}\label{thm:pw_corr}
Suppose the system $(\cF,\mu)$ is mixing and that $\inf\phi>0$. Then there exist constants $b>0$, $K>0$, and $\Lambda\in(0,1)$ such that, for each $f\in L^1([0,1],dx)$ and $g\in BV[0,1]$
\beq\label{eq:pw_corr}
\left| \int_0^1 f\circ \cF^n g\,dx - \int_0^1 f\,d\mu\int_0^1g\,dx \right| \leq K\Lambda^{-n} {\norm{f}}_1\left({\norm{g}}_1+b\bigvee_0^1 g \right).
\eeq
\end{thm}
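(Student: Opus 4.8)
The plan is to route everything through the transfer (Perron--Frobenius) operator $L$ of $\cF$ with respect to Lebesgue measure, defined by $\int_0^1 (Lh)\,v\,dx=\int_0^1 h\,(v\circ\cF)\,dx$ for $h\in L^1([0,1],dx)$ and $v\in L^\infty$. Iterating with $v=f$ rewrites the quantity to be estimated as $\int_0^1 f\cdot L^n g\,dx-\int_0^1 f\,d\mu\int_0^1 g\,dx$. Since $\mu$ is $\cF$-invariant we have $L\phi=\phi$, and taking $v\equiv 1$ shows $\int_0^1 Lh\,dx=\int_0^1 h\,dx$, so the Lebesgue integral is an $L^{*}$-fixed functional. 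Thus the theorem reduces to proving that $L$ has a \emph{spectral gap} on $BV[0,1]$: a splitting $L=P+N$ in which $Ph=\bigl(\int_0^1 h\,dx\bigr)\phi$ is a rank-one projection, $PN=NP=0$, and $\norm{N^n}_{BV\to BV}\le C_0\Lambda^n$ for some $C_0>0$ and $\Lambda\in(0,1)$. Granting this (and normalizing $\mu$ to be a probability), the decomposition $L^n g=\bigl(\int_0^1 g\,dx\bigr)\phi+N^ng$ gives
\[
\Bigl|\int_0^1 (f\circ\cF^n)\,g\,dx-\int_0^1 f\,d\mu\int_0^1 g\,dx\Bigr|=\Bigl|\int_0^1 f\cdot N^ng\,dx\Bigr|\le\norm{f}_1\,\norm{N^ng}_\infty\le C_0\Lambda^n\,\norm{f}_1\,\norm{g}_{BV},
\]
and since $\norm{g}_{BV}$ is comparable to $\norm{g}_1+\bigvee_0^1 g$, this is the claimed inequality, with $K$ and $b$ absorbing $C_0$ and the comparison constant.

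To produce the gap, the first step is a Lasota--Yorke inequality for $L$ on $BV[0,1]$: using that $\cF$ is piecewise $C^2$ with $\inf|\cF'|>1$, a branch-by-branch estimate gives $\bigvee_0^1 Lh\le\alpha\bigvee_0^1 h+\beta\,\norm{h}_1$ with some $\alpha<1$, the factor $\alpha$ reflecting the uniform contraction of the inverse branches (and a distortion term tamed by the $C^2$ hypothesis), and $\beta$ absorbing the contributions of the finitely many branch endpoints. Since the inclusion $BV[0,1]\hookrightarrow L^1([0,1],dx)$ is compact, the standard quasi-compactness theorem (Ionescu-Tulcea--Marinescu, Hennion) shows that $L$ on $BV[0,1]$ has essential spectral radius $\le\alpha<1$ and only finitely many eigenvalues, of finite multiplicity, in $\{|z|\ge 1\}$. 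The remaining hypotheses fix the peripheral spectrum: the invariant density $\phi$ --- which is the $L$-fixed point in $BV[0,1]$, unique by mixing, hence bounded --- is the eigenfunction for the eigenvalue $1$; mixing forbids any other eigenvalue of modulus $1$ (such an eigenvalue would make the correlations oscillate rather than converge); and $\inf\phi>0$ forces $1$ to be simple. The associated spectral projection is $Ph=\bigl(\int_0^1 h\,dx\bigr)\phi$ by the $L$- and $L^{*}$-invariances noted above, and $N:=L-P$ has spectral radius $\Lambda_0<1$; choosing any $\Lambda\in(\Lambda_0,1)$ gives $\norm{N^n}_{BV\to BV}\le C_0\Lambda^n$, the slack between $\Lambda_0$ and $\Lambda$ also absorbing any polynomial prefactor from Jordan blocks on the essential part of the spectrum.

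I expect the one genuinely technical step to be the Lasota--Yorke estimate: the effort lies in showing that the coefficient of $\bigvee_0^1 h$ can be taken strictly below $1$, which requires honest bookkeeping of the distortion of the piecewise $C^2$ inverse branches and of the jumps created at the finitely many break points; everything afterward is soft spectral theory. One routine loose end also needs attention: the identity $\int_0^1(f\circ\cF^n)\,g\,dx=\int_0^1 f\cdot L^ng\,dx$ must be justified for $f\in L^1([0,1],dx)$ rather than merely $f\in L^\infty$, but since $\phi$ is bounded above and below, $L^1(dx)$ and $L^1(\mu)$ coincide, and $\cF$-invariance of $\mu$ gives $f\circ\cF^n\in L^1(dx)$, so the identity extends from $f\in L^\infty$ to $f\in L^1$ by density, the final estimate being continuous in $\norm{f}_1$. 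This completes the plan, delivering a bound of the advertised shape $K\Lambda^n\,\norm{f}_1\bigl(\norm{g}_1+b\bigvee_0^1 g\bigr)$.
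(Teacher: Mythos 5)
Your sketch is correct in outline, but note that the paper does not prove this statement at all: it is quoted as a known result of Hofbauer--Keller, ``found in \cite{Liverani-pwexp} up to trivial modifications,'' so the only honest comparison is with those references. Your route --- Lasota--Yorke inequality on $BV$, compactness of $BV\hookrightarrow L^1$, Ionescu-Tulcea--Marinescu/Hennion quasi-compactness, then elimination of the peripheral spectrum --- is the classical Hofbauer--Keller argument and does deliver the bound in exactly the advertised form, with the prefactor $\norm{f}_1(\norm{g}_1+b\bigvee_0^1 g)$ falling out of $|\int f\,N^ng\,dx|\le\norm{f}_1\norm{N^ng}_\infty$ and $\norm{\cdot}_\infty\le\norm{\cdot}_1+\bigvee_0^1(\cdot)$ on the unit interval. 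By contrast, Liverani's proof in \cite{Liverani-pwexp} avoids spectral theory altogether and runs through invariant cones and the Hilbert projective metric, which buys explicit, computable constants $K$, $b$, $\Lambda$; your soft spectral argument gives existence of the constants but no control over them, which is all the present paper needs. Two small refinements: (i) the division of labor between the hypotheses is slightly different from what you state --- simplicity of the eigenvalue $1$ already follows from ergodicity, while $\inf\phi>0$ is what guarantees $\operatorname{supp}\mu=[0,1]$, so that mixing of $(\cF,\mu)$ (a statement only about the dynamics on $\operatorname{supp}\mu$) actually excludes \emph{all} peripheral eigenvalues of $L$ on $BV[0,1]$, not just those visible to $\mu$; and (ii) the one-step Lasota--Yorke coefficient is $2/\inf|\cF'|$, which need not be below $1$, so as you anticipate one must pass to an iterate of $\cF$ --- exactly as the paper itself does when it assumes $\lambda>2$ after the theorem. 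Finally, the exponent $\Lambda^{-n}$ with $\Lambda\in(0,1)$ in the paper's statement is plainly a typo for $\Lambda^{n}$ (otherwise the bound grows and the subsequent deduction of \eqref{eq:exp_mcorr} and the CLT would fail); your $\Lambda^n$ is the intended reading.
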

\begin{remark}
Under the assumptions of the theorem, $\phi\in BV[0,1]$. 
\end{remark}

Let us denote $\lambda = \inf |\cF'|$ and assume $\lambda>2$, by considering a sufficiently large power of $\cF$ if necessary. Let $\cL$ stand for the transfer operator of $\cF$ with respect to the Lebesgue measure:
\beqn
(\cL g)(x) = \sum_{y\in \cF^{-1}x} \frac{g(y)}{|\cF'(y)|}.
\eeqn
There exists a constant $A$, depending on the map $\cF$, such that
\beq\label{eq:pw_LY_Leb}
\bigvee_0^1 ( \cL g) \leq  2\lambda^{-1}\bigvee_0^1 g + A{\norm{g}}_1
\eeq
holds for all $g\in BV[0,1]$ \cite{LasotaYorke}. Together, \eqref{eq:pw_corr} and \eqref{eq:pw_LY_Leb} imply the CLT in a straightforward fashion, as we will now see.

\begin{remark}
The Lasota--Yorke inequality \eqref{eq:pw_LY_Leb} is usually used with real-valued functions. From its proof in \cite{Liverani-pwexp} it is apparent that it holds true for complex-valued functions. Also the bound in \eqref{eq:pw_corr} extends to complex-valued functions. First assume $f=u+iv$ is complex and $g$ is real, use \eqref{eq:pw_corr}, and notice that ${\norm{u}}_1+{\norm{v}}_1\leq \sqrt{2}{\norm{f}}_1$. Next, assume also $g=t+iw$ is complex, use \eqref{eq:pw_corr}, and notice that $\bigvee_0^1 t+\bigvee_0^1 w\leq 2\bigvee_0^1 g$.
\end{remark}

Setting $W_r = w_1\cdots w_{r-1}$ and $\average{f} = \int_0^1 f\,d\mu$ as before,
\beq\label{eq:multi_transfer}
\average{w_1\cdots w_r}  = \average{w_1\cdot W_r\circ \cF^{p+q}}
 =  \int_0^1 \cL^{p-1}(\phi w_1)\cdot W_r\circ \cF^{q+1} \,dx .
\eeq
Moreover,
\beqn
\int_0^1 \cL^{p-1}(\phi w_1) \,dx = 
\average{w_1}
\quad\text{and}\quad
\int_0^1W_r\,d\mu = \average{w_2\cdots w_r},
\eeqn
where invariance has been used. Hence, \eqref{eq:pw_corr} yields
\beq\label{eq:expanding_multicorr}
\begin{split}
| \average{w_1\cdots w_r} - \average{w_1}\average{w_2\cdots w_r} | 
& \leq K\Lambda^{-q-1} \left(1+b\bigvee_0^1 \cL^{p-1}(\phi w_1) \right),
\end{split}
\eeq
as $|w_i|=1$ implies ${\norm{W_r}}_1 = 1$ and ${\norm{\cL^{p-1}(\phi w_1)}}_1\leq {\norm{\phi w_1}}_1 = {\norm{\phi}}_1 = 1$.

We are done if we can establish a good bound on $\bigvee_0^1 \cL^{p-1}(\phi w_1)$. A straightforward iteration of \eqref{eq:pw_LY_Leb} will not suffice, because $\bigvee_0^1 (\phi w_1)$ seems to grow at a dominating exponential rate due to the presence of $\cF^{p-1}$ in $w_1$. Controlling the growth of total variation is precisely the reason we introduced the regularizing transfer operator $\cL$ in \eqref{eq:multi_transfer}.

Let us denote $G_p = \phi \cdot g \cdot g\circ \cF\cdots g\circ \cF^p$, where  $g=e^{itf/\sqrt{k\Var S_p}}$. In particular, $G_{p-1} = \phi w_1$.
With the aid of the identity
\beqn
\cL(f\circ \cF\cdot g) = f\cdot \cL g
\eeqn
we are able to write the recursion relation
\beqn
\cL^p G_{p}  = \cL^p(G_{p-1} \cdot g\circ \cF^p) = \cL^p(G_{p-1})\cdot g = \cL(\cL^{p-1}G_{p-1})\cdot g,
\eeqn
because $\cL^p$ is the transfer operator of $\cF^p$. Now,
\beqn
\bigvee_0^1 \cL^p G_{p}  \leq  {\norm{g}}_\infty \bigvee_0^1\cL(\cL^{p-1}G_{p-1}) + {\norm{\cL^{p}G_{p-1}}}_\infty \bigvee_0^1 g,
\eeqn
where $|g|=1$ yields $|\cL^{p}G_{p-1}|  \leq \cL^{p}|G_{p-1}|  = \cL^{p}\phi = \phi $ so that  ${\norm{\cL^{p}G_{p-1}}}_\infty\leq {\norm{\phi}}_\infty$. By \eqref{eq:pw_LY_Leb},
\beqn
\bigvee_0^1 \cL^p G_{p}  \leq 2\lambda^{-1} \bigvee_0^1\cL^{p-1}G_{p-1} + A +  {\norm{\phi}}_\infty \bigvee_0^1 g,
\eeqn
which can be iterated to prove
\beqn
\begin{split}
\bigvee_0^1 \cL^p G_{p}  & \leq (2\lambda^{-1})^p\bigvee_0^1 (\phi g) + \frac{A +  {\norm{\phi}}_\infty \bigvee_0^1 g}{1-2\lambda^{-1}}
 \leq (2\lambda^{-1})^p\bigvee_0^1 \phi + \frac{A +  2{\norm{\phi}}_\infty \bigvee_0^1 g}{1-2\lambda^{-1}}
\end{split}
\eeqn
for every value of $p$. Finally, $\bigvee_0^1 g \leq  (|t|/\sqrt{k\Var S_p}) \bigvee_0^1 f = \cO(1/\sqrt{n}) \bigvee_0^1 f$ for fixed values of $t$. Hence, $\sup_n \bigvee_0^1 \cL^{p-1}(\phi w_1) < \infty$ and
\beq\label{eq:exp_mcorr}
| \average{w_1\cdots w_r} - \average{w_1}\average{w_2\cdots w_r} | \leq C\Lambda^{-q}
\eeq
for all $2\leq r\leq k$ and all $n$. This bound implies the CLT due to Corollary \ref{cor:corr_sum_CLT}.

We finish with a discussion of the map $\cF:x\mapsto 2x\mod 1$. In this case the situation is quite a bit simpler than above, and \eqref{eq:pw_LY_Leb} is not needed. First of all, $\phi\equiv 1$. Second, $\bigvee_0^1 \cL g \leq \frac12 \bigvee_0^{\frac12} g + \frac12 \bigvee_{\frac12}^1 g= \frac12 \bigvee_0^{1} g$ and $\bigvee_0^1 g\circ \cF^\ell \leq 2^\ell(\bigvee_0^1 g+|g(1)-g(0)|) \leq 2^{\ell+1} \bigvee_0^1 g$ hold for all $g\in BV$. Using these facts and the bound $\bigvee_0^1 fg \leq \bigvee_0^1 f \, {\norm{g}}_\infty  + {\norm{f}}_\infty \bigvee_0^1 g$ recursively,
\beqn
\begin{split}
 \bigvee_0^1 \cL^{p-1} w_1 & \leq \frac{1}{2^{p-1}} \bigvee_0^1 w_1 
\leq \frac{1}{2^{p-1}} {\norm{g}}_\infty^{p-1} \sum_{\ell = 0}^{p-1} \bigvee_0^1 g\circ \cF^{\ell}  
\leq 4{\norm{g}}_\infty^{p-1} \bigvee_0^1 g.
\end{split}
\eeqn
Finally, we recall that ${\norm{g}}_\infty = 1$ and insert the bound above into \eqref{eq:expanding_multicorr}. Again, \eqref{eq:exp_mcorr} follows.


\section{Conclusion} 
We have argued that detailed information about pair correlations for suitable classes of observables may be sufficient for proving the Central Limit Theorem for a given dynamical system. We have then shown that, for Sinai Billiards in two dimensions as well as transitive Anosov diffeomorphisms in any dimension, such information can be encoded into a single pair correlation estimate. In the case of Billiards, the estimate has been obtained by relaxing the regularity assumptions of an estimate in \cite{Chernov-BilliardsCoupling}. An estimate in \cite{BressaudLiverani} works readily in the Anosov case.

Interesting in its own right is the fact that in both cases the pair correlation estimates implied good estimates on multiple correlations, which tend to be difficult to bound. As suggested in \cite{Chernov-BilliardsCoupling}, such bounds yield limit theorems more sophisticated than the CLT, including the Weak Invariance Principle and Almost Sure Invariance Principle. We have chosen not to discuss these extensions here as it would seem to produce little that is new.

Let us mention that the pair correlation bound for Billiards is the product of a coupling argument, originally due to Young \cite{Young} and further refined by Dolgopyat (see \cite{ChernovDolgopyatBBM}, \cite{Chernov-BilliardsCoupling}, and \cite{ChernovMarkarian}). The proof of the pair correlation bound for Anosov diffeomorphims in \cite{BressaudLiverani} is likewise based on a coupling method. This approach has turned out to be very flexible and adaptable to many kinds of systems with some hyperbolicity.

It is then informative to recognize that the proof of the CLT for Billiards presented above depends, at the formal level, little on the fact that the rate of correlation decay is exponential. Rather, the stability property of the classes of observables (Lemma~\ref{lem:stability}) and the form of the prefactor in the correlation estimate (Remark~\ref{rem:multicorr_bound}) are the ingredients that count. Similar remarks apply to the Anosov case. It would be interesting to know if it is possible to derive a pair correlation bound that implies a strong bound on multiple correlations and hence the CLT in a slowly mixing example. 

We have also discussed non-invertible systems in the setting of piecewise expanding interval maps and observed that the situation seems rather different. Further work is needed.

Finally, we propose the following objective, which has motivated our work, to think about: formulate checkable conditions on pair correlations for a dynamical system --- likely more complicated than a single estimate --- under which the CLT holds. Advances in this direction are not only of theoretical interest but could benefit the applied scientist.



\end{document}